\crefname{hypothesis}{Hypothesis}{Hypotheses}
\title{One to beat them all: ``RYU'' -- a unifying framework\\ for the construction of safe balls}
\author{
	Thu-Le Tran\thanks{
		Univ. Rennes, IRMAR - UMR 6625, F-35000 Rennes, France \\
		Mathematic department, School of Education, Can Tho Univ, Vietnam
	}
	\and Cl{\'e}ment Elvira\thanks{
		IETR UMR CNRS 6164, CentraleSupelec Rennes Campus, 35576 Cesson Sévigné, France
	}
	\and Hong-Phuong Dang\thanks{
		IETR UMR CNRS 6164, CentraleSupelec Rennes Campus, 35576 Cesson Sévigné, France
	}
	\and C{\'e}dric Herzet\thanks{
		Univ. Rennes, Ensai, CNRS, CREST - UMR 9194, F-35000 Rennes, France
	}
}
	\definecolor{myGreen}{RGB}{35,139,69}
	\definecolor{myOrange}{RGB}{217,72,1}
\xdef\csname bf\x \endcsname{\noexpand\ensuremath{\noexpand\mathbf{\x}}}
\xdef\csname bs\x \endcsname{\noexpand\ensuremath{\noexpand\boldsymbol{\x}}}
\xdef\csname bs\x \endcsname{\noexpand\ensuremath{\noexpand\boldsymbol{\x}}}
\xdef\csname bf\x \endcsname{\noexpand\ensuremath{\noexpand\mathbf{\x}}}
\xdef\csname bb\x \endcsname{\noexpand\ensuremath{\noexpand\mathbb{\x}}}
\xdef\csname ds\x \endcsname{\noexpand\ensuremath{\noexpand\mathds{\x}}}
\xdef\csname cal\x \endcsname{\noexpand\ensuremath{\noexpand\mathcal{\x}}}
\newcommand{\vecZero}{{\mathbf 0}}
	\newcommand{\pageInRef}[1]{}
	\newcommand{\definitionInRef}[1]{Def.~#1}
	\newcommand{\theoremInRef}[1]{Th.~#1}
	\newcommand{\propositionInRef}[1]{Prop.~#1}
	\newcommand{\corollaryInRef}[1]{Cor.~#1}
	\newcommand{\lemmaInRef}[1]{Lem.~#1}
	\newcommand{\itemInRef}[1]{Item~#1}
	\newcommand{\pageInRef}[1]{}
	\newcommand{\definitionInRef}[1]{Definition~#1}
	\newcommand{\theoremInRef}[1]{Theorem~#1}
	\newcommand{\propositionInRef}[1]{Proposition~#1}
	\newcommand{\corollaryInRef}[1]{Corollary~#1}
	\newcommand{\lemmaInRef}[1]{Lemma~#1}
	\newcommand{\itemInRef}[1]{Item~#1}
\newcommand{\LASSO}{LASSO\xspace}
\def\ie{\textit{i.e.},\xspace}
\def\eg{\textit{e.g.},\xspace}
\def\ie{\textit{i.e.},\xspace}
\def\eg{\textit{e.g.},\xspace}
\newcommand{\defeq}{\triangleq}
\newcommand{\aScalar}{z}
\newcommand{\aScaling}{t}
\newcommand{\aDimension}{d}
\newcommand{\aVariable}{\mathbf{\aScalar}}
\newcommand{\aDualVariable}{\aVariable^*}
\newcommand{\aSubgradient}{\mathbf{g}}
\newcommand{\aFunction}{h}
\newcommand{\kRbar}{\overline{\kR}}
\DeclareMathOperator{\Fen}{\normalfont{\texttt{Fen}}}
\DeclareMathOperator{\Breg}{\normalfont{\texttt{Breg}}}
\newcommand{\obs}{\bfy}
\newcommand{\obsdim}{m}
\newcommand{\dicosymb}{a}
\newcommand{\dicomat}{\mathbf{\MakeUppercase{\dicosymb}}}
\newcommand{\atom}{\mathbf{\dicosymb}}
\newcommand{\pvsymb}{x}
\newcommand{\pvel}{\pvsymb}
\newcommand{\pv}{\mathbf{\pvsymb}}
\newcommand{\pvdim}{n}
\newcommand{\pvopt}{\pv^\star}
\newcommand{\dvsymb}{u}
\newcommand{\dv}{\mathbf{\dvsymb}}
\newcommand{\dvopt}{\dv^\star}
\newcommand{\dvdim}{\obsdim}
\newcommand{\pfunc}{P}
\NewDocumentCommand{\sigFunc}{o}{ \IfValueTF{#1}{ \pfunc(#1) }{ \pfunc } }
\newcommand{\loss}{f}
\newcommand{\reg}{g}
\newcommand{\dfunc}{D}
\NewDocumentCommand{\obsFunc}{o}{ \IfValueTF{#1}{ \dfunc(#1) }{ \dfunc } }
\newcommand{\gapfunc}{\normalfont{\texttt{GAP}}}
\newcommand{\idxobs}{i}
\newcommand{\idxatom}{j}
\newcommand{\convCoeff}{\alpha}
\DeclareMathOperator{\domain}{dom}
\newkmacro\sign[1][{\cdot}]{\mathrm{sign}({#1})}
\newcommand{\norm}[1]{\lVert#1\rVert}
\newcommand{\normOne}[1]{\lVert#1\rVert_1}
\newcommand{\normTwo}[1]{\lVert#1\rVert_2}
\newcommand{\dotp}[2]{\langle #1 \mid #2\rangle}
\NewDocumentCommand{\subDiff}{o}{ \IfValueTF{#1}{ \partial #1 }{ \partial } }
\newcommand{\saferegion}{\mathcal{S}}
\newcommand{\mathtexttt}[1]{\normalfont{\texttt{#1}}}
\newcommand{\ballsymb}{b}
\newcommand{\safeball}{\mathcal{\MakeUppercase{\ballsymb}}}
\newcommand{\spherec}[1][]{ \IfValueTF{#1}{ \mathbf{c}_{\mathtexttt{#1}} }{ \mathbf{c} } }
\newcommand{\spherer}[1][]{ \IfValueTF{#1}{ r_{\mathtexttt{#1}} }{ r } }
\newcommand{\GAPball}{\safeball_{\textnormal{\texttt{GAP}}}}
\newcommand{\GAPballc}{\mathbf{c}_{\textnormal{\texttt{GAP}}}}
\newcommand{\GAPballr}{r_{\textnormal{\texttt{GAP}}}}
\newcommand{\xGAPballc}{\mathbf{c}_{\pv\textnormal{\texttt{-GAP}}}}
\newcommand{\xGAPballr}{r_{\pv\textnormal{\texttt{-GAP}}}}
\newcommand{\FBI}{\text{RYU}}
\newcommand{\Holder}{\FBI\xspace}
\newcommand{\HOLDERball}{\safeball_{\textnormal{\texttt{\FBI}}}}
\newcommand{\HOLDERballc}{\mathbf{c}_{\textnormal{\texttt{\FBI}}}}
\newcommand{\HOLDERballr}{r_{\textnormal{\texttt{\FBI}}}}
\newcommand{\FNEballc}{\spherec[FNE]}
\newcommand{\FNEballr}{\spherer[FNE]}
\newcommand{\SASVIballc}{\spherec[SASVI]}
\newcommand{\SASVIballr}{\spherer[SASVI]}
\newcommand{\SAFEballc}{\spherec[SAFE]}
\newcommand{\SAFEballr}{\spherer[SAFE]}
\newcommand{\dynEDPPballc}{\spherec[dyn.EDPP]}
\newcommand{\dynEDPPballr}{\spherer[dyn.EDPP]}
\newcommand{\dynEDPPballparam}{{t^\star}}
\newcommand{\seqFERballc}{\spherec[SFER]}
\newcommand{\seqFERballr}{\spherer[SFER]}
\newcommand{\seqSLOREballc}{\spherec[SLORES]}
\newcommand{\seqSLOREballr}{\spherer[SLORES]}
\crefname{prop}{Proposition}{Propositions}
\newcommand{\hypref}[1]{\ref{#1}}
\begin{document}

\maketitle

\begin{abstract}
  In this paper, we present a new framework, called “\Holder{}”, for constructing ``safe'' regions -- specifically, bounded sets that are guaranteed to contain the dual solution of a target optimization problem.
  Our framework applies to the standard case where the objective function is composed of two components: a closed, proper, convex function with Lipschitz-smooth gradient and another closed, proper, convex function.
  We show that the \Holder{} framework not only encompasses but also improves upon the state-of-the-art methods proposed over the past decade for this class of optimization problems.
\end{abstract}

\begin{keywords}
	Convex optimization, screening rules, safe regions
\end{keywords}

\begin{MSCcodes}
	68Q25, 68U05
\end{MSCcodes}


\section{Introduction}\label{sec:introduction}
\subsection{Context and state of the art}

In this paper, we consider the following family of optimization problems:
\begin{equation*} \label{eq:primal problem (fuzzy version)}
	\tag{P}
		\mbox{find } \pvopt\in\kargmin_{\pv \in \kR^{\pvdim}} \
		\pfunc(\pv) \triangleq
		\loss(\dicomat\pv) + \reg(\pv)		
\end{equation*}
where \(\dicomat\in\kR^{\dvdim\times\pvdim}\) is some known matrix,  \(\kfuncdef{\loss}{\kR^\dvdim}{\kR}\), \(\kfuncdef{\reg}{\kR^\pvdim}{\kR\cup\{+\infty\}}\) are proper, closed, convex functions and \(\loss\) is ``\(\convCoeff^{-1}\)-Lipschitz smooth over \(\kR^\dvdim\)'', that is \(\loss\) is differentiable everywhere on \(\kR^\dvdim\) and its gradient obeys the following regularity condition for some positive scalar \(\convCoeff>0\):
\begin{align}
	\forall \aVariable,\aVariable'\in\kR^{\dvdim}:\ \|\nabla\loss(\aVariable)-\nabla\loss(\aVariable')\|_2\leq \convCoeff^{-1} \|\aVariable-\aVariable'\|_2.
\end{align}
We assume moreover that~\eqref{eq:primal problem (fuzzy version)} is well-posed in the sense that at least one minimizer \(\pvopt\) exists.
Instances of problems satisfying these hypotheses are common in the literature of machine learning, statistics or signal processing,
and include (among many others) least-squares sparse regression~\cite{Chen1998}, logistic sparse regression~\cite{Koh2007AnIM} or the ``Elastic Net'' problem~\cite{Zou2005}.

The focus of this paper is on the construction of ``safe regions'', \ie 
subsets of \(\kR^\dvdim\) provably containing the unique solution
of the dual problem of \eqref{eq:primal problem (fuzzy version)}.
More specifically, our goal is to identify  some subset \(\saferegion\subseteq\kR^\dvdim\) such that $\dvopt\in\saferegion$ 
where
\begin{equation*} \label{def:dual solution}
	\tag{D}
	\dvopt = \kargmax_{\dv \in \kR^{\dvdim}} \
	\dfunc(\dv) \triangleq
	- \loss^*(-\dv) - \reg^*(\ktranspose{\dicomat}\dv)
\end{equation*}
and \(\loss^*\), \(\reg^*\) denote the convex conjugates of \(\loss\), \(\reg\).

The construction of safe regions has become an active field of research during the last decade (see \textit{e.g.}, \cite{Dai2012Ellipsoid,fercoq2015mind, Herzet16Screening,Herzet2022,ndiaye2017gap,pan2021safe,Tran2022, wang2022sequential,wang2015lasso,wang2014safe,yamada2021dynamic}) and has been triggered by the so-called ``safe feature elimination'' technique (also referred to as ``safe screening''), a procedure to accelerate the resolution of \eqref{eq:primal problem (fuzzy version)}, first  proposed in \cite{Laurent-El-Ghaoui:2012qs} and further extended in many contributions, see \textit{e.g.}, \cite{elvira2020safe,elvira2020icassp,guyard2022screen,ndiaye2021}.
One central element in the effectiveness of these acceleration methods is the identification (preferably at low computational cost) of \textit{small} safe regions with some specific geometry (\eg ball, ellipsoid, dome, etc).
In this paper, we focus on safe regions having a ``ball'' geometry, that is
\begin{equation} \label{eq:def ball}
	\saferegion
	=
	\safeball(\spherec,\spherer)
	\defeq \kset{
		\dv\in\kR^\dvdim
	}{
		\Vert \dv - \spherec \Vert_2 \leq \spherer
	}
\end{equation}
for some \(\spherec\in\kR^m\) and \(\spherer>0\).
In this respect, the state-of-the-art safe ball for the general family of optimization problems considered in this paper is indubitably the so-called ``GAP ball'' proposed in \cite{fercoq2015mind, ndiaye2017gap}.
It is defined for any couple $(\pv,\dv)
	\in \domain(\pfunc)\times\domain(-\dfunc)$
as
\begin{align}\label{eq:def:GAP ball}
	\GAPball(\pv, \dv) \triangleq \safeball(\GAPballc,\GAPballr)
\end{align}
where
\begin{subequations} \label{eq:def GAP ball} 
	\begin{align}
		\GAPballc \,\defeq\, & \dv
		\label{eq:def center GAP ball}                                       \\
		\GAPballr \,\defeq\, & \sqrt{\frac{2\gapfunc(\pv, \dv)}{\convCoeff}}
		\label{eq:def radius GAP ball}
	\end{align}
\end{subequations}
and \(\gapfunc(\pv,\dv) \defeq \pfunc(\pv) - \dfunc(\dv)\) is the so-called duality gap.
The popularity of the GAP ball is due to the two following assets:\vspace{0.2cm}
\begin{enumerate}
	\item The construction of the ball is valid for any problem satisfying our blanket hypotheses, that is \(\loss\), \(\reg\) are proper, closed, convex and  \(\loss\) is {\(\convCoeff^{-1}\)-Lipschitz} smooth over \(\kR^\obsdim\).
	\item A GAP ball can be constructed from any primal-dual feasible couple \((\pv, \dv)\).
	      In particular, under strong duality assumption and continuity of the duality gap over its domain, the radius of the ball can be made arbitrarily  small by choosing \((\pv, \dv)\) sufficiently close to some primal-dual solution \((\pvopt, \dvopt)\).\vspace{0.2cm} 
\end{enumerate}
These features have led the GAP ball to be widely applied and to allow for substantial acceleration performance in many setups, see \eg \cite{Dantas:2019sv,elvira2020safe,elvira2021safeSLOPE, guyard2022screen,Herzet:2019fj}.

Other constructions of safe balls, requiring either additional hypotheses on \(\loss\) and \(\reg\) or the knowledge of some specific  primal-dual couple \((\pv,\dv)\), have also been proposed in the literature.
All
the safe ball constructions (to the best of our knowledge) falling into the optimization framework considered in this paper are gathered in \Cref{table: safe regions} and will be reviewed in greater details in \Cref{sec:presentation holder ball}.
We note that, although requiring additional assumptions, some of these works put to the forth 
that the construction of safe balls smaller than the GAP ball is possible.
In particular, some contributions have highlighted two avenues for improvement.
In~\cite{Herzet16Screening}, the authors proposed a safe ball (referred to as ``FNE ball'') for the \LASSO problem and proved that it is a (potentially strict) \emph{subset} of the GAP ball.
More recently, the authors of~\cite{yamada2021dynamic} introduced the so-called ``dynamic EDPP ball'' and emphasized that the latter has \emph{a smaller radius} than the GAP ball constructed with the same primal-dual pair~\cite[\theoremInRef{10}]{yamada2021dynamic}.

In this paper, we provide a new mathematical framework gathering and extending these results to the general family of optimization problems~\eqref{eq:primal problem (fuzzy version)}.

\subsection{Contributions}

{
\def\yspacing{0.05cm}
\begin{table}[t]
	\centering
	\begin{tabular}{l l c c}
		\hline
		\textbf{Safe region} & \textbf{Relation}                                                   & \textbf{Cstr. on}  $(\pv,\dv)$ & \textbf{Hyp.  on $\loss$ and $\reg$} \\
		\hline
		GAP~\cite{fercoq2015mind, ndiaye2017gap,ndiaye2021}
		                     & $\supseteq \HOLDERball(\pv, \dv)$
		                     & \mbox{feasible}
		                     & \hypref{H2}-\hypref{H3}
		\\[\yspacing]

		\(\pv\)-GAP~\cite{Herzet2022}
		    & \(\supseteq \HOLDERball(\pv, \dv)\)
		    & \mbox{feasible}
		    & \hypref{H2}-\hypref{H3}, \(\reg=\lambda\|\cdot\|_1\) \\[\yspacing]

		Dyn. EDPP~\cite{yamada2021dynamic}
		                     & $= \HOLDERball(\dynEDPPballparam \pv, \dv)$
		                     & \mbox{feasible}
		                     & $\loss=\tfrac{1}{2}\|\obs-\cdot\|_2^2$,
		$\reg=$ gauge
		\\[\yspacing]

		FNE~\cite{Herzet16Screening}
		                     & $= \HOLDERball(\pv, \dv)$
		                     & $\ktranspose{\dicomat}\dv \in \partial\reg(\pv)$
		                     & $\loss=\tfrac{1}{2}\|\obs-\cdot\|_2^2$, $\reg=\lambda\|\cdot\|_1$
		\\[\yspacing]

		SASVI~\cite{pmlr-v32-liuc14}
		                     & $= \HOLDERball({\bf0}_\pvdim, \dv)$
		                     & $\dv\in\domain(-\dfunc)$
		                     & $\loss=\tfrac{1}{2}\|\obs-\cdot\|_2^2$, $\reg=\lambda\|\cdot\|_1$
		\\[\yspacing]

		EDPP~\cite{wang2015lasso}
		                     & $= \HOLDERball(\pv,\dv)$
		                     & \eqref{eq:def couple seq setup 1}-\eqref{eq:def couple seq setup 2}
		                     & $\loss=\tfrac{1}{2}\|\obs-\cdot\|_2^2$, $\reg=\lambda\|\cdot\|_1$
		\\[\yspacing]

		DPP~\cite{wang2015lasso}
		                     & $\supseteq \HOLDERball(\pv,\dv)$
		                     & \eqref{eq:def couple seq setup 1}-\eqref{eq:def couple seq setup 2}
		                     & $\loss=\tfrac{1}{2}\|\obs-\cdot\|_2^2$, $\reg=\lambda\|\cdot\|_1$
		\\[\yspacing]

		SAFE~\cite{Laurent-El-Ghaoui:2012qs}
		                     & $\supseteq \HOLDERball({\bf0}_\pvdim, \dv)$
		                     & $\dv\in\domain(-\dfunc)$
		                     & $\loss=\tfrac{1}{2}\|\obs-\cdot\|_2^2$, $\reg=\lambda\|\cdot\|_1$
		\\[\yspacing]

		SLORES~\cite{wang2014safe}
		                     & $\supseteq\HOLDERball(\pv,\dv)$
		                     & \eqref{eq:def couple seq setup 1}-\eqref{eq:def couple seq setup 2}
		                     & $\loss=\mbox{logistic}$, $\reg=\lambda\|\cdot\|_1$
		\\[\yspacing]

		SFER~\cite{pan2021safe}
		                     & $= \HOLDERball(\pv,\dv)$
		                     & \eqref{eq:def couple seq setup 1}-\eqref{eq:def couple seq setup 2}
		                     & $\loss=\mbox{logistic}$, $\reg=\lambda\|\cdot\|_1$
		\\[\yspacing]
		\hline
	\end{tabular}
	\vspace{0.5cm}
	\caption{
		Summary of the main safe-ball constructions proposed in the literature during the last decade. The first column provides the name and the references associated to the safe ball, the second describes its connection with the proposed \Holder ball, the third indicates the constraints on the primal-dual couple $(\pv,\dv)$ used in the construction.
		The last column specifies the setup considered by the authors in their work.
		\vspace{-0.5cm}
	}
	\label{table: safe regions}
\end{table}
}

The contribution of this paper is two-fold.
We first introduce a new safe-ball referred to as ``\FBI{} ball''. This ball is defined \(\forall (\pv,\dv)\in\domain(\pfunc)\times\domain(-\dfunc)\) as follows:\vspace{0.1cm} 
\begin{equation}
	\label{eq:theo: holder ball:ustar in holder ball}
	\HOLDERball(\pv, \dv)
	\defeq
	\safeball(\HOLDERballc, \HOLDERballr)
\end{equation}
\vspace{0.1cm}
where
\begin{subequations}
	\begin{align}
		\HOLDERballc \,\defeq\, & \frac{1}{2}\kparen{\dv  - \nabla \loss(\dicomat \pv)}
		\label{eq:def center FBI ball}                                                                                                \\
		\HOLDERballr \,\defeq\, & \sqrt{\frac{\gapfunc(\pv, \dv)}{\convCoeff} - \frac{\normTwo{\dv + \nabla\loss(\dicomat\pv)}^2}{4}}
		\label{eq:def radius FBI ball}
		,
	\end{align}
\end{subequations}
see \Cref{theo: holder ball}.\footnote{
	Note that the quantity under the square root in 
	\eqref{eq:def radius FBI ball} 
	is necessarily nonnegative (see~\Cref{lemma:u - r_x GAP inequality}).} 
The name ``\FBI'' stems from ``\textbf{R}efined Fenchel-\textbf{Y}oung ineq\textbf{u}ality'' and refers to the fact that the safeness of the ball is a consequence of the (double) application of a refined version of the well-known Fenchel-Young inequality (see \Cref{app:proofs FBI}).
Our ball construction is valid under
the same  generic assumptions as the GAP ball (see \Cref{sec:optimization framework} for a detailed discussion about our working hypotheses).
In particular:
\textit{i)}~it can be applied to any problem \eqref{eq:primal problem (fuzzy version)} involving a proper, closed, convex function \(\loss\) which is \(\convCoeff^{-1}\)-Lipschitz smooth over \(\kR^\obsdim\), and a proper, closed, convex function \(\reg\);
\textit{ii)}~primal-dual feasibility is the only assumption required for the pair \((\pv,\dv)\).

Second, we show that our safe ball construction generalizes or improves over all the existing results of the literature.
More specifically, we prove that all the existing safe balls correspond to particular cases or supersets of the proposed ball.
These results are summarized in the second column of \Cref{table: safe regions} and correspond to Propositions~\ref{prop:FBI ball subset GAP ball} to~\ref{prop: connection to SLORE and SFER balls} of the paper. 
As a byproduct, our analysis also provides a unified review of safe balls for problem~\eqref{def:dual solution} under hypotheses~\hypref{H2}-\hypref{H3} by connecting existing results in a common framework.

Interestingly, we note that the GAP ball is always a superset of the \Holder ball, that is:
\begin{equation} \label{eq:FBI ball subset GAP ball}
	\HOLDERball(\pv, \dv)\subseteq \GAPball(\pv, \dv),
\end{equation}
where the inclusion is shown to be strict as long as \((\pv, \dv)\) is not a primal-dual optimal couple, see \Cref{prop:FBI ball subset GAP ball}.
Moreover, a rapid inspection of \eqref{eq:def radius GAP ball} and \eqref{eq:def radius FBI ball} shows that the squared radius of the \Holder ball is never greater than half the squared radius of the GAP ball.

Since our construction is valid for any feasible\footnote{Strictly speaking, the proposed construction applies to any couple \((\pv,\dv)\in\kR^\pvdim\times\kR^\dvdim\) but (similarly to the GAP ball) it leads to a ball with infinite radius when \((\pv,\dv)\notin\domain(\pfunc)\times\domain(-\dfunc)\). This is the reason why we restrict our construction to \(\domain(\pfunc)\times\domain(-\dfunc)\) in the paper.} couple \((\pv, \dv)\) and holds under general assumptions on \(\loss\), \(\reg\), the results in \Cref{table: safe regions} therefore emphasize that the proposed framework unifies and generalizes all the methodologies previously proposed in the literature.

\subsection{Paper organization}
The rest of the paper is organized as follows.
In \Cref{sec:notational conventions}, we detail the notational conventions used in the paper.
In \Cref{sec:optimization framework}, we describe the working hypotheses considered in our derivations and discuss some of their implications.
\Cref{sec:presentation holder ball} is dedicated to the presentation of our new safe ball and its connection with the previous results of the literature.
Most of the technical details are deferred to \Cref{app:proofs comparison,app:proofs FBI,app:convex analysis}.
\vspace{0.2cm}

\section{Notations} \label{sec:notational conventions}
Unless mentioned explicitly, we will use the following notational conventions throughout the paper.
Vectors are denoted by lowercase bold letters (\textit{e.g.}, \(\pv, \aVariable\)) and matrices by uppercase bold letters (\textit{e.g.}, \(\dicomat\)).
We use the symbol ``\(\ktranspose{}\)'' to denote the transpose of a vector or a matrix.
The ``all-zero'' 
vector of dimension \(\pvdim\) is written \(\vecZero_{\pvdim}\)
.
\(\dotp{\aVariable}{\aVariable'}\) denotes the standard inner product between \(\aVariable\) and \(\aVariable'\). 
We use the notation \(\pvel_\idxatom\) to refer to the \(\idxatom\)th entry of a vector \(\pv\).
For matrices, we use \(\atom_\idxatom\) to denote the \(\idxatom\)th column of \(\dicomat\).
We let \(\kRbar = \kR\cup\{-\infty, +\infty\}\) where \(\kR\) refers to the set of real numbers. 
Given an extended real-valued function \(\kfuncdef{\aFunction}{\kR^\aDimension}{\kRbar}\), we let 
\begin{equation}
    \domain(\aFunction)\triangleq\kset{\aVariable\in\kR^\aDimension}{\aFunction(\aVariable)<+\infty}
    .
\end{equation}
%
The subdifferential of \(\kfuncdef{\aFunction}{\kR^\aDimension}{\kRbar}\) at \(\aVariable\in\kR^{\aDimension}\) is denoted \(\partial \aFunction(\aVariable)\).
It is defined for any \(\aVariable\in\domain(\aFunction)\) as 
\begin{equation}
    \partial \aFunction(\aVariable)
    \triangleq
    \kset{
	    \aSubgradient\in\kR^{\aDimension}
	}{
	    \forall \aVariable'\in\kR^{\aDimension}:\ \aFunction(\aVariable')\geq \aFunction(\aVariable)+\dotp{\aSubgradient}{\aVariable'-\aVariable}
	}.
\end{equation}
We refer to the elements of \(\partial \aFunction(\aVariable)\) as ``subgradients'', see~\cite[\definitionInRef{3.2}\pageInRef{35}]{Beck2017aa}.
Finally, 
\begin{equation}
\kfuncdef{\aFunction^*}{\kR^\aDimension}{\kRbar}[\aDualVariable][\sup_{\aVariable\in\kR^\aDimension} \
    \kangle{\aDualVariable,\aVariable} - \aFunction(\aVariable)],
\end{equation}
denotes the convex conjugate of \(\aFunction\), see~\cite[\definitionInRef{4.1}\pageInRef{87}]{Beck2017aa}. 
\vspace{0.3cm}


\section{Optimization framework}\label{sec:optimization framework}
In this paper, we consider problem \eqref{eq:primal problem (fuzzy version)} with the following minimal assumptions:\vspace{0.3cm}
\begin{center}
	\begin{minipage}{.8\columnwidth}
		\iftoggle{arxiv}{
			\begin{enumerate}[label={(H\arabic*)}] 
				\item \(\loss\) and \(\reg\) are  proper, closed and convex functions. \label{H2}
				\item \(\loss\) is \(\convCoeff^{-1}\)-Lipschitz smooth over \(\kR^\obsdim\). \label{H3}
			\end{enumerate}
		}{
			\begin{enumerate}[(H1)] 
				\item \(\loss\) and \(\reg\) are  proper, closed and convex functions. \label{H2}
				\item \(\loss\) is \(\convCoeff^{-1}\)-Lipschitz smooth over \(\kR^\obsdim\). \label{H3}
			\end{enumerate}
		}
	\end{minipage}
\end{center}
\vspace{0.3cm}
%
%
We note that~\hypref{H2} and~\hypref{H3} correspond to the general hypotheses involved in the construction of the GAP ball, see \cite{ndiaye2021}.
In the rest of this section we elaborate on some properties of problems \eqref{eq:primal problem (fuzzy version)}-\eqref{def:dual solution} induced by these hypotheses.

First, since \(\loss\) (resp. \(\reg\)) is proper, closed and convex from~\hypref{H2}, its convex conjugate \(\loss^*\) (resp. \(\reg^*\)) is proper, closed and convex, see \cite[Theorems 4.5 and 4.13]{Beck2017aa}. 
Moreover, the convexity and \(\convCoeff^{-1}\)-Lipschitz smoothness over \(\kR^\obsdim\) of \(\loss\) in~\hypref{H3} imply that \(\loss^*\) is \(\convCoeff\)-strongly convex, see \cite[\theoremInRef{5.26}]{Beck2017aa}, that is:
\begin{equation}
	\mbox{\(\forall \aVariable,\aVariable'\in\domain(\loss^*)\) and \(\bfg\in\partial\loss^*(\aVariable)\)}:\
	\loss^*(\aVariable')
	\geq
	\loss^*(\aVariable)
	+ \dotp{\bfg}{\aVariable'-\aVariable}
	+ \tfrac{\convCoeff}{2} \|\aVariable'-\aVariable\|_2^2
	.
\end{equation}
%
Under the properness assumption in \hypref{H2}, the duality gap, defined as
\begin{equation}
	\kfuncdef{\gapfunc}{\kR^n\times\kR^m}{\kRbar}[(\pv,\dv)][\pfunc(\pv) - \dfunc(\dv)]
\end{equation}
is always a nonnegative quantity, see ~\cite[Item~(i) of \propositionInRef{15.21}\pageInRef{254}]{Bauschke2017}. Moreover, \(\gapfunc(\pv,\dv)<+\infty\) if and only if \((\pv,\dv)\in\domain(\pfunc)\times\domain(-\dfunc)\).
Hypotheses \hypref{H2}-\hypref{H3} also imply that strong duality holds for some primal-dual couple as emphasized by the following lemma:
\vspace{0.1cm}

\begin{lemm} \label{lemma:strong duality}
	Let \(\pvopt\) be a minimizer of \eqref{eq:primal problem (fuzzy version)}.
	If \hypref{H2}-\hypref{H3} hold, then there exists \(\dvopt\in\kR^\dvdim\) such that
	\begin{equation}\label{eq:primal-dual solution}
		\gapfunc(\pvopt,\dvopt)=0
		.
	\end{equation}
\end{lemm}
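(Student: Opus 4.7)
My plan is to construct an explicit dual witness $\dvopt$ from the primal optimum $\pvopt$ and verify the gap closes via two applications of the Fenchel--Young \emph{equality}.

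First I would exploit the fact that hypothesis \hypref{H3} forces $\domain(\loss) = \kR^\obsdim$, so that $\loss$ is finite and continuous on the whole space. This qualifies the subdifferential chain rule and sum rule that I need: since $\loss\circ\dicomat$ is finite everywhere (hence continuous), one has
\begin{equation*}
\subDiff(\loss\circ\dicomat + \reg)(\pvopt)
= \subDiff(\loss\circ\dicomat)(\pvopt) + \subDiff\reg(\pvopt)
= \ktranspose{\dicomat}\,\nabla\loss(\dicomat\pvopt) + \subDiff\reg(\pvopt),
\end{equation*}
where the last equality uses the differentiability of $\loss$ (from \hypref{H3}). If no such calculus result is already cited in the appendix I would invoke the standard Fenchel--Moreau calculus (e.g.\ Bauschke--Combettes, Thm.~16.47 and Cor.~16.48), whose constraint qualification reduces here to $\dicomat\pvopt \in \operatorname{int}\domain(\loss) = \kR^\obsdim$, which is automatic.

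Next I would set
\begin{equation*}
\dvopt \defeq -\nabla\loss(\dicomat\pvopt).
\end{equation*}
From the Fermat rule $\vecZero_{\pvdim}\in\subDiff\pfunc(\pvopt)$ and the sum rule above, there exists a subgradient $\bfg\in\subDiff\reg(\pvopt)$ with $\bfg = -\ktranspose{\dicomat}\nabla\loss(\dicomat\pvopt) = \ktranspose{\dicomat}\dvopt$. Hence $\ktranspose{\dicomat}\dvopt\in\subDiff\reg(\pvopt)$. Simultaneously, differentiability of $\loss$ at $\dicomat\pvopt$ gives $-\dvopt\in\subDiff\loss(\dicomat\pvopt)$. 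Now the Fenchel--Young equality (which holds precisely on the graph of the subdifferential) yields the two identities
\begin{align*}
\loss(\dicomat\pvopt) + \loss^*(-\dvopt) &= \dotp{-\dvopt}{\dicomat\pvopt}, \\
\reg(\pvopt) + \reg^*(\ktranspose{\dicomat}\dvopt) &= \dotp{\ktranspose{\dicomat}\dvopt}{\pvopt}.
\end{align*}

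Finally, adding these two equalities the right-hand sides cancel, giving
\begin{equation*}
\loss(\dicomat\pvopt) + \reg(\pvopt) + \loss^*(-\dvopt) + \reg^*(\ktranspose{\dicomat}\dvopt) = 0,
\end{equation*}
that is, $\pfunc(\pvopt) = \dfunc(\dvopt)$, which is exactly $\gapfunc(\pvopt,\dvopt)=0$. Note this argument incidentally shows $\dvopt\in\domain(-\dfunc)$, so the constructed witness is feasible. I do not foresee a serious obstacle: the only delicate point is justifying the sum/chain rule for subdifferentials, which is why I emphasize that $\loss$ being finite-valued on all of $\kR^\obsdim$ makes the relevant constraint qualification hold trivially.
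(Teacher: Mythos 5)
Your proof is correct, but it takes a genuinely different route from the paper. The paper's proof never touches the primal optimality conditions: it invokes the abstract Fenchel--Rockafellar duality theorem (\cite[Theorems~15.23 and 15.24.(viii)]{Bauschke2017}), whose constraint qualification \(\mathrm{relint}(\domain(\loss))\cap\mathrm{relint}(\dicomat\domain(\reg))\neq\emptyset\) it then verifies by noting that \hypref{H3} forces \(\domain(\loss)=\kR^\obsdim\) and that \(\mathrm{relint}(\dicomat\domain(\reg))\) is non-empty for a proper convex \(\reg\). You instead build the witness explicitly: Fermat's rule plus the subdifferential sum/chain rule (qualified, as you correctly note, because \(\loss\circ\dicomat\) is finite-valued hence continuous everywhere) gives \(\dvopt=-\nabla\loss(\dicomat\pvopt)\) with \(\ktranspose{\dicomat}\dvopt\in\subDiff\reg(\pvopt)\), and two Fenchel--Young equality cases close the gap. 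Both arguments ultimately lean on the same fact -- \hypref{H3} trivializes the relevant qualification condition -- but they buy different things. The paper's route is shorter given the cited theorems and requires no subdifferential calculus; yours is more self-contained and constructive, identifies the dual optimum in closed form, and delivers the optimality conditions~\eqref{eq:optimality condition ustar = -nabla f(Axstar)}--\eqref{eq: optimality condition: u dual feasible} as a byproduct (the paper obtains these separately from \cite[\theoremInRef{19.1}]{Bauschke2017}). One small bookkeeping point worth making explicit in your write-up: \(\pvopt\in\domain(\reg)\) (needed for the sum rule and for the Fenchel--Young equality applied to \(\reg\)) follows because \(\pfunc\) is proper under \hypref{H2}--\hypref{H3}, so a minimizer necessarily has finite objective value.
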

\begin{proof}
	If \hypref{H2} is verified, we have from \cite[Theorems~15.23 and 15.24.(viii)]{Bauschke2017}
	that strong duality holds and a maximizer \(\dvopt\) to \eqref{def:dual solution} exists provided that 
	\begin{equation} \label{eq:sufficient condition for strong duality}
		\mathrm{relint}(\domain(\loss))\cap \mathrm{relint}(\dicomat\domain(\reg))\neq \emptyset
		,
	\end{equation}
	where \(\mathrm{relint}(\cdot)\) denotes the relative interior of a set.
	Now, under our \(\alpha^{-1}\)-Lipschitz smoothness assumption~\hypref{H3}, we have that \(\domain(\loss)=\kR^\dvdim\) so that condition~\eqref{eq:sufficient condition for strong duality} reduces to \(\mathrm{relint}({\dicomat}\domain(\reg))\neq \emptyset\).
	Since \(\reg\) is a proper convex function, its domain \(\domain(\reg)\) is non-empty (by definition of properness) and convex~\cite[Section~2.3.1]{Beck2017aa}.
	The set \(\dicomat\domain(\reg)\) is thus also non-empty convex as the image of a non-empty convex set under a linear operator \(\dicomat\)~\cite[\propositionInRef{3.5}]{Bauschke2017}.
	Therefore, the relative interior of \(\dicomat\domain(\reg)\) is non-empty as a consequence of~\cite[\theoremInRef{3.17}]{Beck2017aa}.
\end{proof}
We note that any \(\dvopt\) verifying \eqref{eq:primal-dual solution} must obviously be a solution of \eqref{def:dual solution}.
Hence, we have from \Cref{lemma:strong duality} that a maximizer of \eqref{def:dual solution} exists under \hypref{H2}-\hypref{H3}.
Moreover, the \(\convCoeff\)-strong convexity of \(\loss^*\) implies that this  maximizer is unique, see \cite[\theoremInRef{5.25}]{Beck2017aa}.
Finally, since strong duality holds,
the following conditions must be satisfied by any primal-dual optimal couple \((\pvopt,\dvopt)\), see \cite[\theoremInRef{19.1}\pageInRef{330}]{Bauschke2017}:
\begin{align} \label{eq:optimality condition ustar = -nabla f(Axstar)}
	\dvopt                      & = - \nabla \loss(\dicomat \pvopt)                                           \\
	\ktranspose{\dicomat}\dvopt & \in \partial \reg(\pvopt) \label{eq: optimality condition: u dual feasible}
	.
\end{align}

\section{The \FBI{} framework and its connection to the state of the art} \label{sec:presentation holder ball}

The main theoretical result of this paper is the following new safe ball:
\begin{theo}[\Holder ball] \label{theo: holder ball}
	Assume \hypref{H2}-\hypref{H3} hold true.
	Then, we have for any \((\pv, \dv) \in
	\domain(\pfunc)\times\domain(-\dfunc)\):
	\begin{equation}
		\dvopt \in \HOLDERball(\pv, \dv)
		\defeq
		\safeball(\HOLDERballc, \HOLDERballr)
	\end{equation}
	where
	\begin{subequations}
		\begin{align}
			\HOLDERballc \,\defeq\, & \frac{1}{2}\kparen{\dv  - \nabla \loss(\dicomat \pv)}
			\\
			\HOLDERballr \,\defeq\, & \sqrt{\frac{\gapfunc(\pv, \dv)}{\convCoeff} - \frac{\normTwo{\dv + \nabla\loss(\dicomat\pv)}^2}{4}}
			.
		\end{align}
	\end{subequations}
\end{theo}
The name
``\FBI{}'' stands for ``\textbf{R}efined Fenchel-\textbf{Y}oung ineq\textbf{u}ality'', a central element appearing in the proof of the safeness of the proposed region, detailed in \Cref{subsec:proof gap inequality}.


A close inspection of the hypotheses of \Cref{theo: holder ball} reveals that the construction of the \FBI{} ball is applicable under exactly the same assumptions as the GAP ball, that is: \textit{i)} it holds for any problem satisfying hypotheses \hypref{H2}-\hypref{H3} on \(\loss\) and \(\reg\); \textit{ii)} it is valid for any primal-dual feasible couple \((\pv,\dv)\).
Despite of its generality, a careful examination of the definition of the radius of the GAP and \FBI{} balls in~\eqref{eq:def radius GAP ball} and~\eqref{eq:def radius FBI ball}
indicates that -- given a feasible primal-dual pair \((\pv,\dv)\) -- the squared radius of the \Holder ball is \emph{always} at least twice as small as the squared radius of the GAP ball.
In fact, as emphasized in \Cref{sec:GAP ball} below, the GAP ball is a strict superset of the proposed \Holder ball for any feasible primal-dual \((\pv,\dv)\) different from $(\pvopt,\dvopt)$.



In the rest of this section, we explain how the safe balls previously proposed in the literature relate to the \Holder ball.
More specifically, we emphasize that the previous results of the state of the art can be seen as either particular cases or supersets of the proposed ball.
Our results are contained in Propositions \ref{prop:FBI ball subset GAP ball} to \ref{prop: connection to SLORE and SFER balls} and are summarized in the second column of \Cref{table: safe regions}.
The third and fourth columns of the table specify assumptions necessary for constructing the corresponding safe ball: the third column details the nature of the functions \(\loss\), \(\reg\) defining \eqref{eq:primal problem (fuzzy version)}, while the fourth column outlines potential constraints on the primal-dual pair \((\pv, \dv)\) used in the construction of the ball.

Before proceedings to the connection between the \Holder{} ball  and the existing results of the literature, let us make two important remarks regarding the choice the couple \((\pv,\dv)\) involved in the construction of the safe ball.

First, an approach which have been considered (often implicitly) in many contributions of the literature consists in choosing a feasible pair \((\pv,\dv)\) such that
\begin{equation}\label{eq:condition couple u=-nabla f(Ax)}
	\ktranspose{\dicomat}\dv \in \partial\reg(\pv)
	.
\end{equation}
Interestingly, when \eqref{eq:condition couple u=-nabla f(Ax)} is satisfied the function \(\gapfunc\) can be related to two other well-known quantities, namely the Fenchel divergence of \(\loss\) (see \cite[\definitionInRef{2}\pageInRef{9}]{Blondel2020}) and the Bregman divergence of \(\loss^*\)
In particular, the following lemma holds:
\begin{lemm}\label{lemma: connection between gap and divergence}
	Assume \hypref{H2}-\hypref{H3} hold and let\footnote{
		\(\Fen(\pv,\dv)\) corresponds to the Fenchel divergence of \(\loss\) evaluated at \((\dicomat\pv,-\dv)\); \(\Breg(\pv,\dv)\) is reminiscent from the Bregman divergence of \(\loss^*\) evaluated at \((-\dv,\nabla\loss(\dicomat\pv))\). 
			We acknowledge that, in contrast to the standard definition of the Bregman divergence,~\eqref{def:Bregman divergence} does not impose differentiability of \(\loss^*\). 
			However, under~\hypref{H3}, the differentiability of \(\loss\) implies that \(\partial \loss(\dicomat\pv)=\{\nabla \loss(\dicomat\pv)\}\), and, since \(\loss\) is also convex, proper, and closed under~\hypref{H2} (see \Cref{lemma:subdifferential inversion}), we have \(\dicomat\pv\in \partial f^*(\nabla f(\dicomat\pv))\).
			Furthermore, when \(\loss^*\) is differentiable, we recover \(\dicomat\pv=\nabla\loss^*(\nabla\loss(\dicomat\pv))\), and~\eqref{def:Bregman divergence}  coincides with the standard definition of the Bregman divergence.
	}
	\begin{align}
		{\Fen}(\pv,\dv) & \triangleq \loss(\dicomat\pv) + \loss^*(-\dv) + \dotp{\dv}{\dicomat\pv}
		\label{def:Fenchel divergence}                                                                                                  \\
		\Breg(\pv,\dv)  & \triangleq \loss^*(-\dv) - \loss^*(\nabla\loss(\dicomat\pv))+\dotp{\dicomat\pv}{\dv+\nabla\loss(\dicomat\pv)}
		\label{def:Bregman divergence}
		.
	\end{align}
	If \eqref{eq:condition couple u=-nabla f(Ax)} is verified, then \(\forall(\pv,\dv)\in\domain(\pfunc)\times\domain(-\dfunc)\):
	\begin{equation}\label{eq:connection between gap and divergence}
		\gapfunc(\pv,\dv)=\Fen(\pv,\dv)=\Breg(\pv,\dv).
	\end{equation}
\end{lemm}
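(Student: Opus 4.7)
The plan is to rewrite both target equalities in terms of the Fenchel--Young inequality and to show that each becomes an equality under an appropriate subgradient membership condition.

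First I would expand the duality gap using the definition of $\dfunc$, giving
\begin{equation*}
    \gapfunc(\pv,\dv)=\loss(\dicomat\pv)+\loss^*(-\dv)+\reg(\pv)+\reg^*(\ktranspose{\dicomat}\dv).
\end{equation*}
Subtracting $\Fen(\pv,\dv)$ from this expression leaves precisely
\begin{equation*}
    \gapfunc(\pv,\dv)-\Fen(\pv,\dv)=\reg(\pv)+\reg^*(\ktranspose{\dicomat}\dv)-\dotp{\ktranspose{\dicomat}\dv}{\pv}.
\end{equation*}
By the Fenchel--Young inequality applied to the proper, closed, convex function $\reg$ (which is available under \hypref{H2}), this nonnegative quantity vanishes if and only if $\ktranspose{\dicomat}\dv\in\partial\reg(\pv)$. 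Since this is precisely the standing assumption~\eqref{eq:condition couple u=-nabla f(Ax)}, the first equality $\gapfunc(\pv,\dv)=\Fen(\pv,\dv)$ follows.

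For the second equality, I would compute $\Fen(\pv,\dv)-\Breg(\pv,\dv)$ directly; the $\loss^*(-\dv)$ terms cancel and regrouping the inner products yields
\begin{equation*}
    \Fen(\pv,\dv)-\Breg(\pv,\dv)=\loss(\dicomat\pv)+\loss^*(\nabla\loss(\dicomat\pv))-\dotp{\nabla\loss(\dicomat\pv)}{\dicomat\pv}.
\end{equation*}
Here \hypref{H3} guarantees that $\loss$ is everywhere differentiable, so $\partial\loss(\dicomat\pv)=\{\nabla\loss(\dicomat\pv)\}$, and the Fenchel--Young equality applied at the pair $(\dicomat\pv,\nabla\loss(\dicomat\pv))$ forces the right-hand side to vanish. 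This step is unconditional in $(\pv,\dv)$ and does not require~\eqref{eq:condition couple u=-nabla f(Ax)}.

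I do not expect a serious obstacle: the whole argument reduces to two invocations of the Fenchel--Young (in)equality and a careful bookkeeping of signs and inner products. The only subtlety worth spelling out is the justification that $\nabla\loss(\dicomat\pv)\in\partial\loss(\dicomat\pv)$, which hinges on $\loss$ being convex, proper, closed, and differentiable (a point already flagged in the footnote of the lemma and covered by the referenced \Cref{lemma:subdifferential inversion}); once this is stated, both equalities fall out in one line each.
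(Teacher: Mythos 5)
Your proposal is correct and follows essentially the same route as the paper: expand the duality gap, identify the residual $\reg(\pv)+\reg^*(\ktranspose{\dicomat}\dv)-\dotp{\dicomat\pv}{\dv}$ and kill it via the Fenchel--Young equality case under~\eqref{eq:condition couple u=-nabla f(Ax)}, then obtain $\Fen=\Breg$ from the unconditional Fenchel--Young equality $\loss(\dicomat\pv)+\loss^*(\nabla\loss(\dicomat\pv))=\dotp{\nabla\loss(\dicomat\pv)}{\dicomat\pv}$. The only cosmetic quibble is that $\nabla\loss(\dicomat\pv)\in\partial\loss(\dicomat\pv)$ follows directly from convexity and differentiability of $\loss$ rather than from \Cref{lemma:subdifferential inversion}, but this does not affect the argument.
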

We refer the reader to \Cref{proof:lemma: connection between gap and divergence} for a proof of this result.
The connection between the duality gap and the Fenchel/Bregman divergences is of interest in two respects.
On the one hand, these divergences are sometimes more straightforward to express than the duality gap and thus give an alternative formulation to the proposed \FBI{} ball under the particular assumption \eqref{eq:condition couple u=-nabla f(Ax)}.
On the other hand, some contributions of the literature (see \Cref{sec:SLORE and SFER balls}) have directly expressed their safe ball as a function of \(\Breg(\pv,\dv)\).
The connection established in \Cref{lemma: connection between gap and divergence} will thus allow us to make a direct link between these works and the \FBI{} framework proposed in this paper.

Second, we mention that the following definition of primal-dual couple \((\pv,\dv)\) has been considered in many contributions of the literature (see \eg{} \cite{ pmlr-v32-liuc14, pan2021safe,wang2015lasso, wang2014safe}):
\begin{equation}\label{eq:def couple seq setup 1}
	(\pv,\dv)\triangleq \left(\pvopt_{\gamma},-{\gamma}^{-1}\nabla \loss(\dicomat\pvopt_{\gamma})\right)
\end{equation}
where \(\gamma>0\) and 
\begin{equation} \label{eq:def couple seq setup 2}
	\pvopt_{\gamma} \in \kargmin_{\pv\in\kR^\pvdim} \loss(\dicomat\pv)+ \gamma \reg({\pv}).
\end{equation}
This type of construction appears for example in ``sequential'' settings where one wants to solve \eqref{eq:primal problem (fuzzy version)} for \(\reg(\cdot) = \lambda \|\cdot\|\) where \(\|\cdot\|\) denotes some norm on \(\kR^\pvdim\) and the solution of a similar problem with \(\reg(\cdot) = \lambda_0 \|\cdot\|\) has already been computed previously.
The solution of the latter problem can then be expressed as in \eqref{eq:def couple seq setup 2} with \(\gamma=\tfrac{\lambda_0}{\lambda}\) and \(\reg(\cdot) = \lambda \|\cdot\|\).

The next lemma emphasizes that \eqref{eq:def couple seq setup 1}-\eqref{eq:def couple seq setup 2} correspond in fact to a particular strategy to build primal-dual couples obeying \eqref{eq:condition couple u=-nabla f(Ax)}:  \vspace{0.2cm}
\begin{lemm}\label{lemma: generating primal-dual couple with rescaled problem}
	If \((\pv,\dv)\) is defined as in \eqref{eq:def couple seq setup 1}-\eqref{eq:def couple seq setup 2}, then it verifies \eqref{eq:condition couple u=-nabla f(Ax)}.
	\vspace{0.2cm}
\end{lemm}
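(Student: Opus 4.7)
The plan is to simply apply Fermat's rule to the inner minimization problem defining $\pvopt_\gamma$, and then invoke subdifferential calculus (chain rule through the linear map $\dicomat$, and sum rule for a smooth function plus a convex function). Under hypotheses \hypref{H2}-\hypref{H3}, the composition $\pv \mapsto \loss(\dicomat \pv)$ is differentiable everywhere on $\kR^\pvdim$ with gradient $\ktranspose{\dicomat}\nabla\loss(\dicomat \pv)$, so no qualification condition is needed for the sum rule with $\gamma \reg$: we have the unambiguous identity
\begin{equation*}
\partial\bigl[\loss(\dicomat \,\cdot\,) + \gamma \reg\bigr](\pv) = \ktranspose{\dicomat}\nabla\loss(\dicomat \pv) + \gamma\, \partial\reg(\pv).
\end{equation*}

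First, I would write the optimality condition for \eqref{eq:def couple seq setup 2}: since $\pvopt_\gamma$ is a minimizer and $\gamma > 0$, Fermat's rule gives $\vecZero_\pvdim \in \ktranspose{\dicomat}\nabla\loss(\dicomat \pvopt_\gamma) + \gamma\, \partial\reg(\pvopt_\gamma)$, equivalently
\begin{equation*}
-\gamma^{-1}\ktranspose{\dicomat}\nabla\loss(\dicomat \pvopt_\gamma) \in \partial\reg(\pvopt_\gamma).
\end{equation*}
Second, I would recognize the left-hand side as $\ktranspose{\dicomat}\dv$ by the definition $\dv = -\gamma^{-1}\nabla\loss(\dicomat\pvopt_\gamma)$ given in \eqref{eq:def couple seq setup 1}, and identify $\pv = \pvopt_\gamma$. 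This yields $\ktranspose{\dicomat}\dv \in \partial\reg(\pv)$, which is precisely \eqref{eq:condition couple u=-nabla f(Ax)}.

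There is no real obstacle here; the only subtlety worth a one-line justification is the validity of the sum/chain rule without a qualification condition, which follows from the fact that $\loss$ is finite and differentiable on the entire space $\kR^\obsdim$ (a consequence of \hypref{H3}), so that $\loss \circ \dicomat$ is a real-valued smooth convex function on $\kR^\pvdim$. One may cite \cite[Theorem~3.50 and Theorem~3.40]{Beck2017aa} or the analogous statements in~\cite{Bauschke2017}. The proof is otherwise immediate and requires no computation beyond rearranging the optimality inclusion.
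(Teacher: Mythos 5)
Your proof is correct and follows essentially the same route as the paper's: both amount to writing the first-order optimality condition for the minimization problem defining \(\pvopt_{\gamma}\) and rearranging to obtain \(\ktranspose{\dicomat}\dv\in\partial\reg(\pv)\). The only cosmetic difference is that the paper rescales the objective by \(\gamma^{-1}\) and reads off the primal--dual optimality conditions \eqref{eq:optimality condition ustar = -nabla f(Axstar)}--\eqref{eq: optimality condition: u dual feasible} of the rescaled problem, whereas you apply Fermat's rule and the subdifferential sum rule directly to the primal; your observation that no qualification condition is needed because \(\loss\circ\dicomat\) is finite and differentiable on all of \(\kR^{\pvdim}\) is exactly the right justification.
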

{
	\begin{proof}
		Since \(\gamma\) is assumed positive,~\eqref{eq:def couple seq setup 2} can be equivalently rewritten:
		\begin{equation} \label{eq:def couple seq setup 2 rewritting}
			\pvopt_{\gamma} \in \kargmin_{\pv\in\kR^{\pvdim}} \gamma^{-1}\loss(\dicomat\pv)+ \reg({\pv})
			.
		\end{equation}
		The functions \(\gamma^{-1}\loss\) and \(\reg\) are convex, proper and closed from~\hypref{H2} and \(\gamma^{-1}\loss\) is \((\gamma\convCoeff)^{-1}\)-Lipschitz smooth over \(\kR^{\dvdim}\).
		One obtains~\eqref{eq:condition couple u=-nabla f(Ax)} by expanding the optimality conditions~\eqref{eq:optimality condition ustar = -nabla f(Axstar)}-\eqref{eq: optimality condition: u dual feasible} associated to~\eqref{eq:def couple seq setup 2 rewritting} and its dual.
	\end{proof}
}

Since many safe balls proposed in the literature rely on the particular construction \eqref{eq:def couple seq setup 1}-\eqref{eq:def couple seq setup 2}, the result in \Cref{lemma: generating primal-dual couple with rescaled problem}
emphasizes that these constructions in fact consider a primal-dual feasible pair \((\pv,\dv)\) verifying \eqref{eq:condition couple u=-nabla f(Ax)} and that (from \Cref{lemma: connection between gap and divergence}) the connection \eqref{eq:connection between gap and divergence} between the duality gap and the Fenchel/Bregman divergences is thus in force.

\vspace{0.2cm}
\subsection{GAP balls}
\label{sec:GAP ball}
The GAP ball first proposed in \cite{fercoq2015mind} and later generalized in \cite{ndiaye2017gap,ndiaye2021} is defined in \eqref{eq:def center GAP ball}-\eqref{eq:def radius GAP ball}.
Its construction is valid under assumptions \hypref{H2}-\hypref{H3} and can take any primal-dual \((\pv,\dv)\) as input.
The next result shows that the \Holder ball is always a subset of the GAP ball: \vspace{0.2cm}

\begin{prop}[The GAP ball contains the \Holder ball] \label{prop:FBI ball subset GAP ball}
	Assume \hypref{H2}-\hypref{H3} hold. Then, for any primal-dual pair \((\pv, \dv) \in
	\domain(\pfunc)\times\domain(-\dfunc)\):
	\begin{equation} \label{eq:FBI ball subset GAP ball B}
		\HOLDERball(\pv, \dv)\subseteq
		\safeball(\GAPballc,\GAPballr)
		.
	\end{equation}
	Moreover, the inclusion is strict as soon as the primal-dual pair \((\pv, \dv)\) is
	not optimal. 
\end{prop}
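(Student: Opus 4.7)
The plan is to verify the inclusion~\eqref{eq:FBI ball subset GAP ball B} by a direct triangle-inequality argument, and then to deduce the strict inclusion from the non-vanishing of the duality gap outside optimality. Abbreviating \(G\defeq\gapfunc(\pv,\dv)\geq 0\) and \(v\defeq\|\dv+\nabla\loss(\dicomat\pv)\|_2\), the definitions~\eqref{eq:def center GAP ball} and~\eqref{eq:def center FBI ball} yield $\HOLDERballc-\GAPballc=-\tfrac{1}{2}(\dv+\nabla\loss(\dicomat\pv))$, so that $\|\HOLDERballc-\GAPballc\|_2=v/2$. For any $\dv'\in\HOLDERball(\pv,\dv)$, the triangle inequality gives $\|\dv'-\GAPballc\|_2\leq \HOLDERballr+v/2$, which reduces the desired inclusion to the scalar estimate
\begin{equation*}
\sqrt{G/\convCoeff-v^2/4}+v/2\leq\sqrt{2G/\convCoeff}.
\end{equation*}

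Both sides being nonnegative, I would square once; the two $v^2/4$ contributions produced on the left cancel, leaving the equivalent statement $v\sqrt{G/\convCoeff-v^2/4}\leq G/\convCoeff$. Squaring once more (again legal, since both sides are nonnegative) and rearranging produces the manifestly true identity
\begin{equation*}
(2G/\convCoeff-v^2)^2\geq 0,
\end{equation*}
which closes the first part of the proposition.

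For strictness when $(\pv,\dv)$ is not primal-dual optimal, I would invoke strong duality (\Cref{lemma:strong duality}): combined with the nonnegativity of the gap, it implies that $G=0$ if and only if $(\pv,\dv)$ is primal-dual optimal, and therefore $G>0$ under our hypothesis. Consequently $\HOLDERballr^2=G/\convCoeff-v^2/4\leq G/\convCoeff<2G/\convCoeff=\GAPballr^2$, so $\HOLDERballr<\GAPballr$ strictly; since two Euclidean balls can coincide only when they share the same radius, the two balls must differ, and the inclusion established above becomes strict. The entire argument is algebraic; the only steps deserving care are the two successive squarings, each of which needs non-negativity of both sides, and the spotting of the perfect-square form that makes the final inequality transparent.
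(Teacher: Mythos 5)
Your proof is correct, but it takes a genuinely different route from the paper's. The paper first recasts membership in \(\HOLDERball(\pv,\dv)\) as the two-distance condition \(\normTwo{\dv'-\dv}^2+\normTwo{\dv'+\nabla\loss(\dicomat\pv)}^2\leq 2\gapfunc(\pv,\dv)/\convCoeff\) (this is \Cref{prop:gap inequality}, equivalently a parallelogram-law rewriting of the center/radius formulas), and then obtains \eqref{eq:FBI ball subset GAP ball B} in one line by dropping the nonnegative second term, which is exactly the GAP-ball membership condition; the same gesture, dropping the first term instead, simultaneously yields \Cref{prop:FBI ball subset x-GAP ball}. You instead work directly with the centers and radii, reducing the inclusion to the ball-containment criterion \(\normTwo{\HOLDERballc-\GAPballc}+\HOLDERballr\leq\GAPballr\) and checking the resulting scalar inequality by two squarings. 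Your bookkeeping is right: the \(v^2/4\) terms do cancel, both squarings are between nonnegative quantities (nonnegativity of \(G=\gapfunc(\pv,\dv)\) and the realness of \(\HOLDERballr\) being guaranteed by weak duality and \Cref{lemma:u - r_x GAP inequality}), and the residual is the perfect square \((2G/\convCoeff-v^2)^2\geq 0\). Your strictness argument (non-optimality forces \(\gapfunc(\pv,\dv)>0\), hence strictly smaller radius, hence distinct balls) is essentially the paper's. What the paper's route buys is brevity and the immediate extension to the \(\pv\)-GAP ball; what yours buys is independence from the two-distance characterization and, as a by-product, the exact tightness condition \(v^2=2\gapfunc(\pv,\dv)/\convCoeff\) under which the \Holder{} ball is internally tangent to the GAP ball.
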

In \cite[Section IV.B]{Herzet2022} a variant of the GAP ball for the specific case where \(\reg = \lambda\|\cdot\|_1\) was proposed.
Although the construction of the ball presented in~\cite{Herzet2022} holds in a slightly more general setup,\footnote{In particular, a slightly weaker version of~\hypref{H3} is considered.} we focus hereafter on the case where \(\loss\) is proper, closed, convex and satisfies~\hypref{H3}.
The center and radius of this ball (referred to as \(\pv\)-GAP since its center depends on \(\pv\) instead of \(\dv\) as in the standard GAP ball) reads as follows:
\begin{subequations} \label{eq:xpball:defs}
	\begin{align}
		\xGAPballc & \triangleq -\nabla \loss(\dicomat\pv)                    \\
		\xGAPballr & \triangleq \sqrt{\frac{2\gapfunc(\pv, \dv)}{\convCoeff}} \label{eq:xpball:def radius}
	\end{align}
\end{subequations}
where \((\pv,\dv)\) can be any primal-dual feasible couple.
Similarly to \Cref{prop:FBI ball subset GAP ball}, the next result shows that the \(\pv\)-GAP ball is also a superset of the proposed \FBI{} region: 

\begin{prop}[The \(\pv\)-GAP ball contains the \Holder ball] \label{prop:FBI ball subset x-GAP ball}
	Assume \hypref{H2}-\hypref{H3} hold. Then, for any primal-dual pair \((\pv, \dv) \in
	\domain(\pfunc)\times\domain(-\dfunc)\):
	\begin{equation} \label{eq:FBI ball subset xGAP ball}
		\HOLDERball(\pv, \dv)\subseteq
		\safeball(\xGAPballc,\xGAPballr)
		.
	\end{equation}
	Moreover, the inclusion is strict as soon as the primal-dual pair \((\pv, \dv)\) is
	not optimal. 
\end{prop}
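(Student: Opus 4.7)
The plan is to establish the containment via the standard ball-in-ball criterion: if $c_1,c_2\in\kR^\dvdim$ and $r_1,r_2\geq 0$, then $\safeball(c_1,r_1)\subseteq\safeball(c_2,r_2)$ whenever $\|c_1-c_2\|_2 + r_1 \leq r_2$ (a direct consequence of the triangle inequality). Applying this to $c_1=\HOLDERballc$, $r_1=\HOLDERballr$, $c_2=\xGAPballc$ and $r_2=\xGAPballr$ reduces the proof to a purely algebraic inequality.

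First I would compute the displacement between the two centers. Substituting the definitions,
\begin{equation*}
    \HOLDERballc - \xGAPballc
    = \tfrac{1}{2}\kparen{\dv - \nabla\loss(\dicomat\pv)} - \kparen{-\nabla\loss(\dicomat\pv)}
    = \tfrac{1}{2}\kparen{\dv + \nabla\loss(\dicomat\pv)},
\end{equation*}
so that $\|\HOLDERballc-\xGAPballc\|_2 = \tfrac{1}{2}\|\dv+\nabla\loss(\dicomat\pv)\|_2$. Introducing the shorthands $s \defeq \|\dv+\nabla\loss(\dicomat\pv)\|_2$ and $G \defeq \gapfunc(\pv,\dv)$, we have $G\geq 0$ together with $s^2/4 \leq G/\convCoeff$ (the latter is precisely \Cref{lemma:u - r_x GAP inequality}, which guarantees $\HOLDERballr$ is well defined). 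The containment criterion then amounts to
\begin{equation*}
    \tfrac{s}{2} + \sqrt{G/\convCoeff - s^2/4} \,\leq\, \sqrt{2G/\convCoeff}.
\end{equation*}

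This algebraic step is the main (and essentially only) technical point. Since the right-hand side is at least $s/2$ (because $2G/\convCoeff \geq s^2/4$), both sides of the rearranged inequality $\sqrt{G/\convCoeff - s^2/4}\leq \sqrt{2G/\convCoeff}-s/2$ are nonnegative, so we may square. After simplification this becomes $s\sqrt{2G/\convCoeff}\leq G/\convCoeff + s^2/2$, which upon setting $u = s/\sqrt{2}$ and $v=\sqrt{G/\convCoeff}$ reads $2uv\leq u^2+v^2$, i.e.\ $(u-v)^2\geq 0$. This proves \eqref{eq:FBI ball subset xGAP ball}.

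For the strict inclusion when $(\pv,\dv)$ is not primal-dual optimal, I would compare the two radii directly rather than tracking where equality occurs in the displayed inequality. One has $\xGAPballr^2 - \HOLDERballr^2 = G/\convCoeff + s^2/4$. Non-optimality of $(\pv,\dv)$ implies $G=\gapfunc(\pv,\dv)>0$ (by strong duality and uniqueness of the dual maximizer, established in \Cref{sec:optimization framework}), so this difference is strictly positive and $\HOLDERballr < \xGAPballr$. Since a Euclidean ball is uniquely determined by its center and radius, the two balls cannot coincide, and combined with $\HOLDERball(\pv,\dv)\subseteq\safeball(\xGAPballc,\xGAPballr)$ this yields strict inclusion.
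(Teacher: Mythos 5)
Your proof is correct, but it follows a genuinely different route from the paper's. The paper proves \Cref{prop:FBI ball subset GAP ball,prop:FBI ball subset x-GAP ball} simultaneously by rewriting the \Holder ball as the sublevel set
\begin{equation*}
	\HOLDERball(\pv,\dv)=\Bigl\{\dv'\in\kR^\dvdim \,:\, \normTwo{\dv'-\dv}^2+\normTwo{\dv'+\nabla\loss(\dicomat\pv)}^2\leq \tfrac{2\gapfunc(\pv,\dv)}{\convCoeff}\Bigr\}
\end{equation*}
(an identity that follows from completing the square and the parallelogram law, and which directly encodes \Cref{prop:gap inequality}); the \(\pv\)-GAP membership condition is then obtained by simply dropping the nonnegative term \(\normTwo{\dv'-\dv}^2\), so the inclusion is immediate and no computation is needed. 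You instead use the triangle-inequality criterion \(\norm{\spherec_1-\spherec_2}_2+\spherer_1\leq\spherer_2\) and reduce it to the elementary inequality \(2uv\leq u^2+v^2\); your center computation, the nonnegativity check before squaring, and the algebra are all correct (note only that \Cref{lemma:u - r_x GAP inequality} actually gives the stronger bound \(\normTwo{\dv+\nabla\loss(\dicomat\pv)}^2\leq 2\gapfunc(\pv,\dv)/\convCoeff\), not merely \(\leq 4\gapfunc(\pv,\dv)/\convCoeff\) as you state, but the weaker form suffices for your purposes). The paper's representation buys a two-for-one proof and a geometric picture of the \Holder ball; your argument is more self-contained but must be redone for each containment. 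For strictness, both arguments coincide in substance: the squared radii differ by \(\gapfunc(\pv,\dv)/\convCoeff+\tfrac14\normTwo{\dv+\nabla\loss(\dicomat\pv)}^2>0\) when the pair is not optimal (which indeed forces \(\gapfunc(\pv,\dv)>0\) by strong duality and uniqueness of the dual maximizer), and two balls with distinct radii cannot be equal as sets.
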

The proof of \Cref{prop:FBI ball subset GAP ball,prop:FBI ball subset x-GAP ball} is given in \Cref{proof:FBI subset GAP}.
\vspace{0.2cm}


\subsection{Dynamic EDDP ball}

In~\cite{yamada2021dynamic}, the authors focused on the particular family of problems where
\begin{align}
	\loss & = \tfrac{1}{2}\|\obs - \cdot\|_2^2\label{eq:dynEDDP:H1} \\
	\reg  & = \lambda\|\cdot\|\label{eq:dynEDDP:H2}
\end{align}
for some vector \(\obs\in\kR^\dvdim\), scalar \(\lambda>0\) and norm \(\|\cdot\|\).\footnote{{More precisely, the authors of~\cite{yamada2021dynamic} considered gauge functions instead of norms for the definition of \(\reg\). Although the results presented in this section still hold in this more general setup, we stick to norms to simplify the exposition.}}
They introduced a new safe ball (see~\cite[\theoremInRef{9}]{yamada2021dynamic}), dubbed ``dynamic EDPP ball'' and valid for any primal-dual feasible couple \((\pv,\dv)\).
The center and radius  of the dynamic EDPP ball reads as follows:
\begin{subequations}
	\begin{align}
		\dynEDPPballc \,=\, & \frac{1}{2}(\obs+\dv - \dynEDPPballparam \dicomat \pv)
		\label{eq:def dynamic EDPP ball:center}                                                                      \\
		\dynEDPPballr \,=\, & \frac{1}{2} \sqrt{\kvvbar{\obs-\dv}_2^2 -  \Vert{\displaystyle\dynEDPPballparam}\dicomat \pv\Vert_2^2}
		\label{eq:def dynamic EDPP ball:radius}
	\end{align}
\end{subequations}
where \(\dynEDPPballparam\) is defined as (with the conventions \(0/0=0\) and \(1/0=+\infty\)):
\begin{equation} \label{eq:EDPP:def definition gamma0}
	\dynEDPPballparam =
	\max \left( 0, \frac{\dotp{\dicomat\pv}{\obs+\dv}
		- 2\lambda\norm{\pv}}{\kvvbar{\dicomat\pv}_2^2} \right)
	.
\end{equation}
The connection between the \Holder and dynamic EDPP balls is established in the following proposition:
\vspace{0.2cm}
\begin{prop}[Dynamic EDPP ball is a special case of the \Holder{} ball]
	\label{cor: dynamic EDPP ball}
	Assume \eqref{eq:dynEDDP:H1}-\eqref{eq:dynEDDP:H2} holds.
	Then,  for any \((\pv, \dv)\in \domain(\pfunc)\times\domain(-\dfunc)\): \vspace{0.2cm}
	\iftoggle{arxiv}{
		\begin{enumerate}[label=\roman*)]
	}{
		\begin{enumerate}[i)]
	}
		\item The quantity \(\dynEDPPballparam\) defined in~\eqref{eq:EDPP:def definition gamma0} verifies
		      \begin{equation*} 
			      \dynEDPPballparam
			      \in
			      \kargmin_{\aScaling\geq0}\
			      \sqrt{
				      \gapfunc(\aScaling\pv, \dv) - \frac{1}{4}\normTwo{\dv + \nabla\loss(\aScaling\dicomat\pv)}^2
			      }
			      .
		      \end{equation*}
		      \label{item:dynamic EDPP ball:definition gamma}
		\item We have
		      \begin{equation*}
			      \safeball(\dynEDPPballc, \dynEDPPballr)= \HOLDERball(\dynEDPPballparam\pv, \dv)
			      .
		      \end{equation*}
		      \label{item:dynamic EDPP ball:special case holder ball}
	\end{enumerate}
\end{prop}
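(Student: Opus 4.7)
My plan is to exploit the explicit form of $\loss$ and of the norm $\reg$ to reduce the squared radius of $\HOLDERball(\aScaling\pv,\dv)$ to a closed-form convex quadratic in $\aScaling$, and then to minimize it over $\aScaling\geq 0$; both parts (i) and (ii) will drop out simultaneously.

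\textbf{Step 1 (centers).} Under \eqref{eq:dynEDDP:H1} we have $\convCoeff=1$ and $\nabla\loss(\aScaling\dicomat\pv)=\aScaling\dicomat\pv-\obs$, so the center of $\HOLDERball(\aScaling\pv,\dv)$ is $\tfrac{1}{2}(\obs+\dv-\aScaling\dicomat\pv)$ for every $\aScaling\geq 0$. Setting $\aScaling=\dynEDPPballparam$ matches \eqref{eq:def dynamic EDPP ball:center} identically; it therefore only remains to handle the radius.

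\textbf{Step 2 (closed-form for the squared radius).} I would compute $\gapfunc(\aScaling\pv,\dv)$ term by term. Positive homogeneity of the norm gives $\reg(\aScaling\pv)=\aScaling\lambda\|\pv\|$ for $\aScaling\geq 0$; a one-line computation yields $\loss^*(-\dv)=\tfrac{1}{2}\normTwo{\dv}^2-\dotp{\obs}{\dv}$; and since $\dv\in\domain(-\dfunc)$ forces $\ktranspose{\dicomat}\dv$ into the polar ball associated with $\lambda\|\cdot\|$, one has $\reg^*(\ktranspose{\dicomat}\dv)=0$. Plugging this into \eqref{eq:def radius FBI ball} and expanding $\normTwo{\dv+\aScaling\dicomat\pv-\obs}^2$, the $\normTwo{\obs}^2$ and cross terms cancel and one obtains
\begin{equation*}
\HOLDERballr(\aScaling\pv,\dv)^2 = \tfrac{1}{4}\normTwo{\obs-\dv}^2 + \tfrac{\normTwo{\dicomat\pv}^2}{4}\,\aScaling^2 - \tfrac{\aScaling}{2}\bigl(\dotp{\dicomat\pv}{\obs+\dv}-2\lambda\|\pv\|\bigr).
\end{equation*}

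\textbf{Step 3 (minimization and conclusion).} The right-hand side is a convex quadratic in $\aScaling$, so its unconstrained minimizer over $\kR$ is $\tfrac{\dotp{\dicomat\pv}{\obs+\dv}-2\lambda\|\pv\|}{\normTwo{\dicomat\pv}^2}$; projecting onto $[0,\infty)$ yields exactly the formula \eqref{eq:EDPP:def definition gamma0} for $\dynEDPPballparam$ (with the conventions $0/0=0$ and $1/0=+\infty$ handling the degenerate case $\dicomat\pv=\vecZero_{\dvdim}$, where the quadratic collapses to an affine function whose minimizer on $[0,\infty)$ is $0$ whenever the slope is nonnegative). Since $\sqrt{\cdot}$ is increasing, this establishes (i). Plugging $\aScaling=\dynEDPPballparam$ back into the expression of Step~2, the linear and quadratic contributions combine into $-\tfrac{1}{4}\dynEDPPballparam^2\normTwo{\dicomat\pv}^2$, so that
\begin{equation*}
\HOLDERballr(\dynEDPPballparam\pv,\dv)^2 = \tfrac{1}{4}\bigl(\normTwo{\obs-\dv}^2-\dynEDPPballparam^2\normTwo{\dicomat\pv}^2\bigr) = \dynEDPPballr^2,
\end{equation*}
which, together with the center identification of Step~1, gives (ii). The only delicate bookkeeping I anticipate is the handling of the boundary cases hidden in~\eqref{eq:EDPP:def definition gamma0} (the $\max$ with $0$ and the conventions $0/0$, $1/0$) and ensuring that positive homogeneity of $\reg$ is invoked only for $\aScaling\geq 0$.
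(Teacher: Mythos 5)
Your proposal is correct and follows essentially the same route as the paper: both reduce the problem to minimizing the squared \Holder{} radius as a one-dimensional convex function of the scaling $\aScaling\geq 0$, identify $\dynEDPPballparam$ as the nonnegative (projected) minimizer, and substitute back to recover the dynamic EDPP radius. The only cosmetic difference is that you expand the radius into an explicit quadratic and use the vertex formula, whereas the paper keeps the expression semi-factored and argues via the first-order optimality condition with the same case distinction on $\dicomat\pv=\vecZero$ and the sign of the unconstrained minimizer.
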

A proof of this result is available in \Cref{subsec:proof connection EDPP ball}.
In particular, the dynamic EDPP ball was shown to exhibit a smaller radius as compared to the GAP ball constructed with the (feasible) primal-dual pair \((\pv,\dv)\), see~\cite[\theoremInRef{10}]{yamada2021dynamic}.
Item~\ref{item:dynamic EDPP ball:special case holder ball} of \Cref{cor: dynamic EDPP ball} (combined with \Cref{prop:FBI ball subset GAP ball}) elucidates this connection by showing that the dynamic EDPP ball is in fact a subset of the GAP ball \(\GAPball(\dynEDPPballparam\pv, \dv)\).
Finally,
item~\ref{item:dynamic EDPP ball:definition gamma} of \Cref{cor: dynamic EDPP ball} provides a novel interpretation of the definition of \(\dynEDPPballparam\), that is \(\dynEDPPballparam\) corresponds to a nonnegative rescaling of the primal vector \(\pv\) minimizing the radius of the \Holder ball.


\vspace{0.2cm}
\subsection{FNE, EDDP, DPP and SASVI balls}

FNE \cite{Herzet16Screening}, (E)DDP \cite{wang2015lasso} and SASVI \cite{pmlr-v32-liuc14} balls are safe regions designed for the same problem where
\begin{align}
	\loss & = \tfrac{1}{2}\|\obs - \cdot\|_2^2\label{eq:FNE:H1} \\
	\reg  & = \lambda\|\cdot\|_1\label{eq:FNE:H2}
	.
\end{align}
They all assume (explicitly or implicitly) that the primal-dual couple \((\pv,\dv)\) used in the construction verifies \eqref{eq:condition couple u=-nabla f(Ax)}.
We start with the description of the FNE ball which corresponds to the most general construction. We address the EDDP, DPP and SASVI balls at the end of the section as particular cases or relaxation of the FNE region.

The FNE ball is defined by the following center and radius:
\begin{subequations}
	\begin{align}
		\FNEballc \,=\, & \dv + \frac{1}{2}(\obs- \dicomat \pv - \dv)
		\label{eq:def FNE ball:center}                                   \\
		\FNEballr \,=\, & \frac{1}{2}\normTwo{\obs - \dicomat \pv - \dv}
		.
		\label{eq:def FNE ball:radius}
	\end{align}
\end{subequations}
In \cite[Theorem 1]{Herzet16Screening}, the authors showed that the FNE ball is safe for any primal-dual feasible pair \((\pv, \dv)\) satisfying
\begin{equation} \label{eq: hat opt 2}
	\dotp{\dv}{\dicomat \pv} = \lambda\normOne{\pv}
	.
\end{equation}
The following result shows that the FNE ball in fact corresponds to a particular case of the \FBI{} ball when \eqref{eq:FNE:H1}-\eqref{eq:FNE:H2} and \eqref{eq: hat opt 2} hold:\vspace{0.2cm}

\begin{prop}[FNE ball is a special case of the \Holder ball] \label{prop:Sequential FNE ball is a special case of Holder ball}
	Assume \eqref{eq:FNE:H1}-\eqref{eq:FNE:H2} holds.
	Then, for any \((\pv, \dv)\in \domain(\pfunc)\times\domain(-\dfunc)\) satisfying~\eqref{eq: hat opt 2}:
	\begin{equation}
		\safeball(\FNEballc, \FNEballr)
		=
		\HOLDERball(\pv, \dv)
		.
	\end{equation}
\end{prop}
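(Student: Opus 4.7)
The plan is to verify the equality of centers and radii by direct substitution, relying on \Cref{lemma: connection between gap and divergence} to rewrite the duality gap as a Fenchel divergence whose explicit form is elementary in the squared-loss setting.

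\textbf{Step 1 (centers coincide).} Under \eqref{eq:FNE:H1} the gradient of \(\loss\) is \(\nabla\loss(\dicomat\pv) = \dicomat\pv - \obs\), so that
\begin{equation*}
\HOLDERballc = \tfrac{1}{2}\bigl(\dv - \nabla\loss(\dicomat\pv)\bigr) = \tfrac{1}{2}\bigl(\dv + \obs - \dicomat\pv\bigr),
\end{equation*}
which matches \(\FNEballc = \dv + \tfrac{1}{2}(\obs - \dicomat\pv - \dv)\) by a one-line rearrangement. This step requires no additional hypothesis.

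\textbf{Step 2 (bridging \eqref{eq: hat opt 2} and \eqref{eq:condition couple u=-nabla f(Ax)}).} I would next check that, under \eqref{eq:FNE:H2} and dual feasibility, condition \eqref{eq: hat opt 2} implies \(\ktranspose{\dicomat}\dv \in \partial\reg(\pv)\). Indeed, dual feasibility \(\dv\in\domain(-\dfunc)\) combined with \(\reg^* = \indfunc_{\{\|\cdot\|_\infty\leq\lambda\}}\) yields \(\|\ktranspose{\dicomat}\dv\|_\infty\leq\lambda\); combined with \(\dotp{\ktranspose{\dicomat}\dv}{\pv} = \lambda\normOne{\pv}\) this gives precisely the characterization of \(\partial(\lambda\|\cdot\|_1)(\pv)\). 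Hence \Cref{lemma: connection between gap and divergence} applies and \(\gapfunc(\pv,\dv) = \Fen(\pv,\dv)\).

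\textbf{Step 3 (closed form for the gap).} I then expand \(\Fen(\pv,\dv)\) using \(\loss^*(\mathbf{w}) = \tfrac{1}{2}\|\mathbf{w}\|_2^2 + \dotp{\obs}{\mathbf{w}}\):
\begin{equation*}
\Fen(\pv,\dv) = \tfrac{1}{2}\|\obs-\dicomat\pv\|_2^2 + \tfrac{1}{2}\|\dv\|_2^2 - \dotp{\obs}{\dv} + \dotp{\dv}{\dicomat\pv} = \tfrac{1}{2}\|\obs - \dicomat\pv - \dv\|_2^2,
\end{equation*}
where the second equality is the polarization identity.

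\textbf{Step 4 (radii coincide).} Noting \(\convCoeff = 1\) for the squared loss, substitute into \eqref{eq:def radius FBI ball}:
\begin{equation*}
\HOLDERballr^2 = \tfrac{1}{2}\|\obs - \dicomat\pv - \dv\|_2^2 - \tfrac{1}{4}\|\dv + \dicomat\pv - \obs\|_2^2 = \tfrac{1}{4}\|\obs - \dicomat\pv - \dv\|_2^2 = \FNEballr^2.
\end{equation*}

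The only delicate step is Step 2, which hinges on correctly characterizing \(\partial(\lambda\|\cdot\|_1)(\pv)\) and invoking the dual-feasibility side-condition to enforce the \(\ell_\infty\) bound; the rest is essentially bookkeeping of quadratic expansions.
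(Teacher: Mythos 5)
Your proof is correct and follows essentially the same route as the paper's: matching the centers by direct computation of \(\nabla\loss(\dicomat\pv)\), using the characterization of \(\partial(\lambda\normOne{\cdot})\) together with dual feasibility to show that \eqref{eq: hat opt 2} implies \eqref{eq:condition couple u=-nabla f(Ax)} so that \Cref{lemma: connection between gap and divergence} applies, and then expanding \(\Fen(\pv,\dv)=\tfrac{1}{2}\normTwo{\obs-\dicomat\pv-\dv}^2\) to identify the radii. No gaps to report.
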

\vspace{0.2cm}
A proof of this result is available in \Cref{subsec:proof connection FNE ball}.
We note that the authors also showed in \cite[\lemmaInRef{1}]{Herzet16Screening} that the FNE ball is a strict subset of the GAP ball as long as \((\pv,\dv)\neq(\pvopt,\dvopt)\). Interestingly, in view of \Cref{prop:Sequential FNE ball is a special case of Holder ball}, this result turns out to be a particular case of  \Cref{prop:FBI ball subset GAP ball} in the more general framework of the \FBI{} ball.

\vspace*{.2cm}

The EDPP and SASVI balls represent specific instances of the FNE ball, resulting from particular choices of the pair \((\pv, \dv)\).
On the one hand, the SASVI ball (see \cite[Section 2.2]{pmlr-v32-liuc14}) corresponds to the case where \((\pv,\dv)=({\bf0}_{\pvdim},\dv)\) for some dual feasible point \(\dv\), \ie
\begin{subequations}
	\begin{align}
		\SASVIballc \,=\, & \frac{1}{2}(\obs+ \dv)
		\label{eq:def SASVI ball:center}                    \\
		\SASVIballr \,=\, & \frac{1}{2}\normTwo{\obs - \dv}
		\label{eq:def SASVI ball:radius}
		.
	\end{align}
\end{subequations}
It is easy to see this couple trivially verifies \eqref{eq: hat opt 2}.
Using \Cref{prop:Sequential FNE ball is a special case of Holder ball} with \((\pv,\dv)=({\bf0}_{\pvdim},\dv)\), \(\dv\in\domain(-\dfunc)\), then directly leads to
\begin{equation}
	\nonumber
	\safeball(\SASVIballc,\SASVIballr)=\HOLDERball({\bf0}_{\pvdim},\dv)
	.
\end{equation}
On the other hand, the center and radius of the EDDP ball (see \cite[Theorem 13]{wang2015lasso}) obeys the same definition \eqref{eq:def FNE ball:center}-\eqref{eq:def FNE ball:radius} as those of the FNE ball but
for \((\pv,\dv)\) defined as in \eqref{eq:def couple seq setup 1}-\eqref{eq:def couple seq setup 2} for some \(\gamma>0\).
Taking into account that
\begin{equation}\label{eq:subdiff norm l1}
	\partial\|\pv\|_1
	= \kset{\aVariable\in\kR^\pvdim}{\dotp{\aVariable}{\pv}=\|\pv\|_1, \|\aVariable\|_\infty\leq 1},
\end{equation}
it is easy to see that \eqref{eq: hat opt 2} together with feasibility of \((\pv,\dv)\) is in fact an equivalent rewriting of \eqref{eq:condition couple u=-nabla f(Ax)} for \(\reg = \lambda\|\cdot\|_1\).
Hence, in view of \Cref{lemma: generating primal-dual couple with rescaled problem}, the couple \((\pv,\dv)\) considered in the EDDP construction verifies \eqref{eq: hat opt 2} and this ball is nothing but a particular instance of FNE ball.
\Cref{prop:Sequential FNE ball is a special case of Holder ball} thus applies for the EDDP ball as well.

Finally, it was shown in \cite[Theorem 13]{wang2015lasso} that the DPP ball is always a superset of the EDPP ball.
This directly leads to the inclusion reported in \Cref{table: safe regions}.
\vspace{0.2cm}

\subsection{SAFE ball}
The SAFE ball is the first safe region proposed in the seminal paper \cite{Laurent-El-Ghaoui:2012qs}.
It applies in the case where
\begin{align}
	\loss & = \tfrac{1}{2}\|\obs - \cdot\|_2^2\label{eq:SAFE:H1} \\
	\reg  & = \lambda\|\cdot\|_1\label{eq:SAFE:H2}
	.
\end{align}
Its center and radius are defined \(\forall \dv\in\domain(-\dfunc)\) as
\begin{align}
	\SAFEballc & = \obs            \\
	\SAFEballr & = \|\obs-\dv\|_2.
\end{align}
In \Cref{subsec:proof:prop: connection to SAFE ball}, we show that the SAFE ball is a relaxation of the proposed \Holder{} ball for \((\pv,\dv)=({\bf0}_\pvdim,\dv)\) with
\(\dv\in\domain(-\dfunc)\).
More specifically, we prove that the following result holds: \vspace{0.2cm}
\begin{prop}\label{prop: connection to SAFE ball}
	Assume \eqref{eq:SAFE:H1}-\eqref{eq:SAFE:H2} holds.
	Then, for any
	\(\dv\in\domain(-\dfunc)\):
	\begin{equation}\label{prop:eq:inclusion FBI in SAFE}
		\safeball(\SAFEballc, \SAFEballr)
		\supseteq
		\HOLDERball({\bf0}_\pvdim, \dv)
		.
	\end{equation}
\end{prop}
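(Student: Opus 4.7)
The plan is to instantiate the RYU ball with $(\pv,\dv) = ({\bf 0}_\pvdim, \dv)$ under the hypotheses \eqref{eq:SAFE:H1}-\eqref{eq:SAFE:H2}, compute its center and radius explicitly, and then conclude by a one-line triangle inequality.

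First I would specialize \Cref{theo: holder ball} to $\loss = \tfrac{1}{2}\|\obs - \cdot\|_2^2$. This gives $\convCoeff = 1$ and $\nabla \loss({\bf 0}_\dvdim) = -\obs$, so the center simplifies to $\HOLDERballc = \tfrac{1}{2}(\dv + \obs)$. The radius then reads
\begin{equation*}
    \HOLDERballr = \sqrt{\gapfunc({\bf 0}_\pvdim, \dv) - \tfrac{1}{4}\|\dv - \obs\|_2^2}.
\end{equation*}

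Next I would evaluate $\gapfunc({\bf 0}_\pvdim, \dv)$. On the primal side, $\pfunc({\bf 0}_\pvdim) = \tfrac{1}{2}\|\obs\|_2^2$ since $\lambda\|{\bf 0}_\pvdim\|_1 = 0$. On the dual side, a direct conjugate computation yields $\loss^*(-\dv) = \tfrac{1}{2}\|\dv\|_2^2 - \langle \dv, \obs\rangle$, while $\reg^* = \iota_{\{\|\cdot\|_\infty \leq \lambda\}}$ for $\reg = \lambda\|\cdot\|_1$, so the assumption $\dv \in \domain(-\dfunc)$ forces $\reg^*(\ktranspose{\dicomat}\dv) = 0$. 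Combining these two ingredients yields the key identity
\begin{equation*}
    \gapfunc({\bf 0}_\pvdim, \dv) = \tfrac{1}{2}\|\obs\|_2^2 - \langle\dv,\obs\rangle + \tfrac{1}{2}\|\dv\|_2^2 = \tfrac{1}{2}\|\obs-\dv\|_2^2,
\end{equation*}
from which the radius collapses to $\HOLDERballr = \tfrac{1}{2}\|\obs-\dv\|_2$.

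Finally I would conclude the inclusion: the RYU ball $\HOLDERball({\bf 0}_\pvdim,\dv)$ is the closed ball of radius $\tfrac{1}{2}\|\obs-\dv\|_2$ around the midpoint of $\obs$ and $\dv$, so $\obs$ and $\dv$ are antipodal diameter points. For any $\dv' \in \HOLDERball({\bf 0}_\pvdim,\dv)$, the triangle inequality gives
\begin{equation*}
    \|\dv'-\obs\|_2 \leq \|\dv' - \HOLDERballc\|_2 + \|\HOLDERballc - \obs\|_2 \leq \tfrac{1}{2}\|\obs-\dv\|_2 + \tfrac{1}{2}\|\obs-\dv\|_2 = \SAFEballr,
\end{equation*}
which is exactly \eqref{prop:eq:inclusion FBI in SAFE}.

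There is no real obstacle here: the argument is a three-step calculation. The only thing to be slightly careful about is justifying $\reg^*(\ktranspose{\dicomat}\dv)=0$ from the feasibility condition $\dv \in \domain(-\dfunc)$, which is immediate from the form of the conjugate of the $\ell_1$-norm. The geometric punchline worth highlighting is that $\obs$ lies on the boundary of the SAFE ball while $\HOLDERballc$ is the midpoint of the segment $[\obs,\dv]$, so $\HOLDERball({\bf 0}_\pvdim,\dv)$ is precisely the ball inscribed in the SAFE ball and tangent to it at $\dv$.
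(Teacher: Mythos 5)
Your proof is correct. The central computation—$\gapfunc({\bf0}_\pvdim,\dv)=\tfrac{1}{2}\normTwo{\obs-\dv}^2$ and hence $\HOLDERballr=\tfrac{1}{2}\normTwo{\obs-\dv}$ with center at the midpoint of $[\obs,\dv]$—is the same as in the paper, though you obtain it by directly evaluating the conjugates while the paper reuses the identity $\gapfunc=\tfrac12\normTwo{\obs-\dicomat\pv-\dv}^2$ established in the FNE proof (after checking that $({\bf0}_\pvdim,\dv)$ satisfies the subdifferential condition $\ktranspose{\dicomat}\dv\in\partial\reg({\bf0}_\pvdim)$). The concluding step differs: the paper writes $\HOLDERball({\bf0}_\pvdim,\dv)$ via Proposition~\ref{prop:gap inequality} as the set $\{\dv':\normTwo{\dv'-\dv}^2+\normTwo{\dv'-\obs}^2\leq\normTwo{\obs-\dv}^2\}$ and observes that dropping the first term gives exactly the SAFE membership condition, whereas you use the explicit center/radius of the RYU ball together with the triangle inequality. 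Both are one-line arguments; the paper's has the advantage of not requiring the radius to be computed at all (it works for any feasible pair once the gap is known), while yours makes the geometry explicit and yields the pleasant observation that the RYU ball is internally tangent to the SAFE ball at $\dv$ (the distance between centers, $\tfrac12\normTwo{\obs-\dv}$, equals $\SAFEballr-\HOLDERballr$). One small slip in your closing remark: $\obs$ is the \emph{center} of the SAFE ball, not a boundary point of it (it is a boundary point of the RYU ball, antipodal to $\dv$); this does not affect the proof.
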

\vspace{0.2cm}

\subsection{SLORE and SFER balls}\label{sec:SLORE and SFER balls}

We end up this section by considering the SLORES and SFER balls respectively proposed in~\cite[\theoremInRef{2}]{wang2014safe} and~\cite[\corollaryInRef{1}]{pan2021safe}.
The focus of these papers is on problem \eqref{eq:primal problem (fuzzy version)} with the following definitions for \(\loss\) and \(\reg\): 
\begin{align}
	\loss(\aVariable) & = \sum_{\idxobs=1}^\dvdim\log(1 + \cste^{-\aScalar_\idxobs}),\label{eq:SLORE:H1} \\
	\reg(\aVariable)  & = \lambda\|\aVariable\|_1\label{eq:SLORE:H2}
	.
\end{align}
The construction of these balls is moreover based on the knowledge of a primal-dual couple \((\pv,\dv)\) verifying \eqref{eq:def couple seq setup 1}-\eqref{eq:def couple seq setup 2} for some \(\gamma>0\). 
The expression of the center and radius of the SLORE and SFER balls respectively read as
\begin{subequations}
	\begin{align}
		\seqSLOREballc & \,=\, \gamma \dv
		\label{eq:def seq SLORE ball:center} \\
		\seqSLOREballr & \,=\, \sqrt{
			\frac{1}{2}
			\Breg(\pv,\dv)
		}
		.
		\label{eq:def seq SLORE ball:radius}
	\end{align}
\end{subequations}
and
\begin{subequations}
	\begin{align}
		\seqFERballc & \,=\, \frac{1}{2}(1+\gamma)\dv
		\label{eq:def seq FER ball:center}            \\
		\seqFERballr & \,=\, \sqrt{
			{\frac{1}{4}}
			\Breg(\pv,\dv)
			- \frac{1}{4}\normTwo{(1-\gamma)\dv}^2
		}
		.
		\label{eq:def seq FER ball:radius}
	\end{align}
\end{subequations}
In \cite[\theoremInRef{3}]{pan2021safe}, it was shown that 
\begin{equation}
		\safeball(\seqFERballc,\seqFERballr)
		\subseteq
		\safeball(\seqSLOREballc, \seqSLOREballr).
\end{equation}
The next result shows that the SFER ball is a particular instance of RYU ball, thereby proving the results in \Cref{table: safe regions}:
\vspace*{.2cm}
\begin{prop}\label{prop: connection to SLORE and SFER balls}
	Assume \eqref{eq:SLORE:H1}-\eqref{eq:SLORE:H2} hold and \((\pv,\dv)\) is defined as \eqref{eq:def couple seq setup 1}-\eqref{eq:def couple seq setup 2} for some \(\gamma>0\). Then, we have
	\begin{equation}\label{eq:connection FBI, SLORE, SFER}
		\safeball(\seqFERballc,\seqFERballr)
		=
		\HOLDERball(\pv, \dv)
		.
	\end{equation}
\end{prop}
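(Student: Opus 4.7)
The plan is to verify the equality $\safeball(\seqFERballc,\seqFERballr) = \HOLDERball(\pv,\dv)$ by directly matching centers and radii, exploiting two key simplifications that stem from the definition \eqref{eq:def couple seq setup 1} of the primal-dual pair and from the specific structure of the logistic loss.

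First, I would rewrite the dual variable as $\dv = -\gamma^{-1}\nabla\loss(\dicomat\pv)$, which is equivalent to $\nabla\loss(\dicomat\pv) = -\gamma\dv$. Substituting this into the definition of $\HOLDERballc$ in \eqref{eq:def center FBI ball} immediately yields
\begin{equation*}
    \HOLDERballc = \tfrac{1}{2}\bigl(\dv - (-\gamma\dv)\bigr) = \tfrac{1}{2}(1+\gamma)\dv = \seqFERballc.
\end{equation*}
This settles the center. In parallel, the same substitution gives $\dv + \nabla\loss(\dicomat\pv) = (1-\gamma)\dv$, so $\normTwo{\dv+\nabla\loss(\dicomat\pv)}^2 = \normTwo{(1-\gamma)\dv}^2$, which already matches the second term under the square root in $\seqFERballr^2$.

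Second, I would pin down the Lipschitz smoothness constant for the logistic loss \eqref{eq:SLORE:H1}: since $\frac{\cste^{-\aScalar}}{(1+\cste^{-\aScalar})^2}\le \tfrac14$, the gradient of $\loss$ is $\tfrac14$-Lipschitz, i.e.\ hypothesis \hypref{H3} holds with $\convCoeff^{-1}=\tfrac14$, hence $\convCoeff = 4$. Therefore the first term in $\HOLDERballr^2$ is $\tfrac14\gapfunc(\pv,\dv)$.

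To finish, I need to replace the duality gap by the Bregman divergence $\Breg(\pv,\dv)$. By \Cref{lemma: generating primal-dual couple with rescaled problem} the pair $(\pv,\dv)$ defined through \eqref{eq:def couple seq setup 1}-\eqref{eq:def couple seq setup 2} satisfies $\ktranspose{\dicomat}\dv\in\partial\reg(\pv)$, and \Cref{lemma: connection between gap and divergence} then gives $\gapfunc(\pv,\dv) = \Breg(\pv,\dv)$. Combining this with the previous two observations yields
\begin{equation*}
    \HOLDERballr^2 = \tfrac{1}{4}\Breg(\pv,\dv) - \tfrac{1}{4}\normTwo{(1-\gamma)\dv}^2 = \seqFERballr^2,
\end{equation*}
which together with the identification of the centers proves \eqref{eq:connection FBI, SLORE, SFER}. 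No step appears genuinely hard: the proof is essentially a mechanical bookkeeping exercise, with the only non-cosmetic input being the invocation of \Cref{lemma: connection between gap and divergence} to rewrite the gap as a Bregman divergence and the computation of the smoothness constant $\convCoeff=4$ for the logistic loss.
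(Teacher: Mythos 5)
Your proposal is correct and follows essentially the same route as the paper's proof: identify $\convCoeff=4$ for the logistic loss, use \Cref{lemma: generating primal-dual couple with rescaled problem} together with \Cref{lemma: connection between gap and divergence} to replace $\gapfunc(\pv,\dv)$ by $\Breg(\pv,\dv)$, and substitute $\nabla\loss(\dicomat\pv)=-\gamma\dv$ to match center and radius. The only difference is cosmetic: you spell out the bound $\cste^{-\aScalar}/(1+\cste^{-\aScalar})^2\le \tfrac14$ where the paper simply asserts the smoothness constant.
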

A proof of this result is available in \Cref{subsec:proof around FBI inequality for logistic loss + ell_1}.

\vspace{0.2cm}
\section{Conclusion}

In this paper, we introduced a new framework for constructing safe balls for a broad class of optimization problems. 
Specifically, our approach addresses cases where the cost function consists of a closed, proper, convex, Lipschitz-smooth term combined with another closed, proper, convex term, and only relies on the knowledge of a primal-dual feasible pair. 
The proposed construction not only unifies existing methods but also extends and improves upon all prior approaches from the last decade, providing a comprehensive framework for generating safe balls within this family of optimization problems and connecting existing results of the literature.

\vspace{0.2cm}

\iftoggle{pagebreakSection}{
	\vfill
	\pagebreak	
}{
}

\appendix


\section{Convex analysis} \label{app:convex analysis}
This appendix reminds two standard results from convex analysis.
The first result relates the subdifferential of a function to the subdifferential of its convex conjugate:

\begin{lemm}[Subdifferential inversion] \label{lemma:subdifferential inversion}
	Let \(\kfuncdef{\aFunction}{\kR^\aDimension}{\kRbar}\) be a proper, closed and convex function.
	Then, for all \(\aVariable,\aDualVariable\in\kR^\aDimension\): 
	\begin{equation} \label{eq:subdifferential inversion}
		\aDualVariable\in\partial\aFunction(\aVariable)
		\;\Longleftrightarrow\;
		\aVariable\in\partial\aFunction^*(\aDualVariable)
		.
	\end{equation}
\end{lemm}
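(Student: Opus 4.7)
The plan is to use the classical characterization of subgradients via Fenchel--Young equality, combined with the biconjugate (Fenchel--Moreau) theorem.

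As a preliminary step, I would establish the well-known equivalence: for any proper function \(h\) and any \(\aVariable,\aDualVariable\in\kR^\aDimension\), one has \(\aDualVariable\in\partial h(\aVariable)\) if and only if the Fenchel--Young equality \(h(\aVariable)+h^*(\aDualVariable)=\dotp{\aDualVariable}{\aVariable}\) holds. The forward direction starts from the subgradient inequality \(h(\aVariable')\geq h(\aVariable)+\dotp{\aDualVariable}{\aVariable'-\aVariable}\) for all \(\aVariable'\in\kR^\aDimension\); rearranging gives \(\dotp{\aDualVariable}{\aVariable}-h(\aVariable)\geq \dotp{\aDualVariable}{\aVariable'}-h(\aVariable')\); taking the supremum over \(\aVariable'\) yields \(\dotp{\aDualVariable}{\aVariable}-h(\aVariable)\geq h^*(\aDualVariable)\); combined with the generic Fenchel--Young inequality, this forces equality. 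The reverse direction of this preliminary equivalence proceeds by writing, for an arbitrary \(\aDualVariable'\in\kR^\aDimension\), the Fenchel--Young inequality \(h(\aVariable)+h^*(\aDualVariable')\geq\dotp{\aDualVariable'}{\aVariable}\); subtracting the equality and rearranging gives \(h^*(\aDualVariable')\geq h^*(\aDualVariable)+\dotp{\aVariable}{\aDualVariable'-\aDualVariable}\), which is precisely the definition of \(\aVariable\in\partial h^*(\aDualVariable)\).

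The second step closes the loop symmetrically. Since \(h\) is proper, closed and convex, so is \(h^*\) by \cite[\theoremInRef{4.5}]{Beck2017aa}. Applying the preliminary equivalence to \(h^*\) in place of \(h\), one obtains that \(\aVariable\in\partial h^*(\aDualVariable)\) is equivalent to \(h^{**}(\aVariable)+h^*(\aDualVariable)=\dotp{\aDualVariable}{\aVariable}\). The hypotheses on \(h\) together with the biconjugate theorem (\cite[\theoremInRef{4.8}]{Beck2017aa}) give \(h^{**}=h\), so both conditions in \eqref{eq:subdifferential inversion} are equivalent to the single Fenchel--Young equality \(h(\aVariable)+h^*(\aDualVariable)=\dotp{\aDualVariable}{\aVariable}\), which proves the claim.

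The main (and only) subtlety lies in invoking the biconjugate theorem to close the equivalence: without properness, closedness, and convexity of \(h\) one would only obtain the implication \(\aDualVariable\in\partial h(\aVariable)\Rightarrow\aVariable\in\partial h^*(\aDualVariable)\), but not its converse, since \(h^{**}\leq h\) holds unconditionally whereas \(h^{**}=h\) requires the Fenchel--Moreau hypotheses. Beyond this point, the argument is routine bookkeeping around the Fenchel--Young inequality and supremum manipulations.
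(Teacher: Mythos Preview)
Your argument is substantively correct and follows the standard route via the Fenchel--Young equality and the biconjugate theorem; the paper itself does not supply a proof but simply defers to \cite[\theoremInRef{4.20}]{Beck2017aa}, so there is nothing to compare at the level of technique.

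One expositional slip worth fixing: what you label ``the reverse direction of this preliminary equivalence'' does not prove the reverse direction. The preliminary equivalence you announce is \(\aDualVariable\in\partial h(\aVariable)\Longleftrightarrow h(\aVariable)+h^*(\aDualVariable)=\dotp{\aDualVariable}{\aVariable}\), but the argument you give under that heading actually shows that the Fenchel--Young equality implies \(\aVariable\in\partial h^*(\aDualVariable)\), not \(\aDualVariable\in\partial h(\aVariable)\). The genuine reverse direction is even simpler: assuming the equality and using the Fenchel--Young inequality \(h(\aVariable')+h^*(\aDualVariable)\geq\dotp{\aDualVariable}{\aVariable'}\) for arbitrary \(\aVariable'\), subtraction yields \(h(\aVariable')\geq h(\aVariable)+\dotp{\aDualVariable}{\aVariable'-\aVariable}\), which is exactly \(\aDualVariable\in\partial h(\aVariable)\). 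Once this is corrected, your second step (applying the equivalence to \(h^*\) and invoking \(h^{**}=h\)) goes through cleanly. Alternatively, the two implications you \emph{did} prove, applied once to \(h\) and once to \(h^*\) together with \(h^{**}=h\), already suffice to close the loop---so the ingredients are all present, only the labeling is off.
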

A proof of this lemma can be found in \cite[\theoremInRef{4.20}\pageInRef{105}]{Beck2017aa}.

The second result recalls two Fenchel-Young inequalities:

\begin{lemm}[Fenchel-Young inequalities] \label{theo: FY inequalities}
	Let \(\kfuncdef{\aFunction}{\kR^\aDimension}{\kRbar}\) be a proper and convex function. Then, 
	for all \(\aVariable,\aDualVariable\in\kR^\aDimension\):
	\begin{equation} \label{theo: FY inequalities 1}
		\aFunction(\aVariable) + \aFunction^*(\aDualVariable)
		\geq
		\dotp{\aDualVariable}{\aVariable}
	\end{equation}
	with equality if and only if \(\aDualVariable \in \partial \aFunction(\aVariable)\). 

	If \(\aFunction\) is moreover closed and \(\convCoeff\)-strongly convex, then for all \(\aVariable,\aDualVariable\in\kR^\aDimension\):
	\begin{equation}
		\label{theo: FY inequalities 2}
		\aFunction(\aVariable) + \aFunction^*(\aDualVariable)
		\geq
		\dotp{\aDualVariable}{\aVariable}
		+ \frac{\convCoeff}{2}\norm{\aVariable - \nabla \aFunction^*(\aDualVariable)}_2^2.
	\end{equation}
\end{lemm}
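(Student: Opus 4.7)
The plan is to handle the two inequalities separately, since the first is a direct consequence of the definition of the convex conjugate while the second requires invoking strong convexity and the differentiability of $h^*$.

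For the first inequality, I would argue directly from the definition $h^*(\aDualVariable)=\sup_{\aVariable'\in\kR^\aDimension}\dotp{\aDualVariable}{\aVariable'}-h(\aVariable')$. Since the supremum is over all $\aVariable'$, plugging in the particular choice $\aVariable'=\aVariable$ yields $h^*(\aDualVariable)\geq\dotp{\aDualVariable}{\aVariable}-h(\aVariable)$, which is~\eqref{theo: FY inequalities 1} after rearranging (properness of $h$ is what ensures $h(\aVariable)>-\infty$ so that the rearrangement is meaningful). For the equality characterization, equality in the plugged-in bound is equivalent to $\aVariable$ being a maximizer of the concave function $\aVariable'\mapsto\dotp{\aDualVariable}{\aVariable'}-h(\aVariable')$. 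Applying the standard first-order optimality condition for concave maximization (which, because $-h$ is proper concave, reduces to $0\in\aDualVariable-\partial h(\aVariable)$) gives the claimed characterization $\aDualVariable\in\partial h(\aVariable)$.

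For the strengthened inequality~\eqref{theo: FY inequalities 2}, the strategy is to pivot through the point $\tilde{\aVariable}\defeq\nabla h^*(\aDualVariable)$. First, I would note that when $h$ is closed, proper, and $\convCoeff$-strongly convex, its conjugate $h^*$ is everywhere differentiable on $\kR^\aDimension$ (with $\convCoeff^{-1}$-Lipschitz gradient), so $\tilde{\aVariable}$ is well-defined and $\partial h^*(\aDualVariable)=\{\tilde{\aVariable}\}$. Applying \Cref{lemma:subdifferential inversion} then yields $\aDualVariable\in\partial h(\tilde{\aVariable})$, and the equality case of~\eqref{theo: FY inequalities 1} at the pair $(\tilde{\aVariable},\aDualVariable)$ gives $h(\tilde{\aVariable})=\dotp{\aDualVariable}{\tilde{\aVariable}}-h^*(\aDualVariable)$.

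Next I would invoke the $\convCoeff$-strong convexity inequality for $h$ at the base point $\tilde{\aVariable}$ with subgradient $\aDualVariable\in\partial h(\tilde{\aVariable})$: for every $\aVariable\in\kR^\aDimension$,
\begin{equation*}
h(\aVariable)\;\geq\; h(\tilde{\aVariable})+\dotp{\aDualVariable}{\aVariable-\tilde{\aVariable}}+\tfrac{\convCoeff}{2}\|\aVariable-\tilde{\aVariable}\|_2^2.
\end{equation*}
Substituting the expression for $h(\tilde{\aVariable})$ from the previous step and collecting the inner product terms into $\dotp{\aDualVariable}{\aVariable}$ gives~\eqref{theo: FY inequalities 2}. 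The main subtlety of the argument is the appeal to differentiability of $h^*$ under strong convexity of $h$; I would simply cite the corresponding standard result from \cite{Beck2017aa} (the dual statement of Theorem~5.26) rather than reprove it, as the rest of the derivation is a short bookkeeping exercise.
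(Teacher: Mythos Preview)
Your proposal is correct and follows essentially the same approach as the paper: for \eqref{theo: FY inequalities 2}, both you and the paper define $\tilde{\aVariable}=\nabla h^*(\aDualVariable)$ (well-defined since strong convexity of $h$ makes $h^*$ smooth), use \Cref{lemma:subdifferential inversion} to obtain $\aDualVariable\in\partial h(\tilde{\aVariable})$, apply the equality case of \eqref{theo: FY inequalities 1} at $(\tilde{\aVariable},\aDualVariable)$, and combine this with the first-order strong-convexity inequality for $h$ at $\tilde{\aVariable}$. The only cosmetic difference is that the paper separates out the trivial case $\aVariable\notin\domain(h)$ explicitly, whereas in your argument it is absorbed into the strong-convexity inequality (which holds vacuously when $h(\aVariable)=+\infty$).
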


We note that~\eqref{theo: FY inequalities 1} corresponds to the standard formulation of the well-known Fenchel-Young inequality. 
A proof of this result follows from \cite[\theoremInRef{4.6}\pageInRef{88}]{Beck2017aa} and \cite[\theoremInRef{4.20}\pageInRef{104}]{Beck2017aa}.
\eqref{theo: FY inequalities 2} is a refined version of the Fenchel-Young inequality which applies to closed and strongly convex functions.
Since the latter is less common in the literature, a proof is provided hereafter.\footnote{
	Another version of the proof can be found in \cite[\lemmaInRef{1}\pageInRef{20}]{ndiaye:tel-01962450}.
}

\begin{proof}[Proof of~\eqref{theo: FY inequalities 2}]
	{
		\newcommand{\aSubGradientInTheProof}{\nabla\aFunction^*(\aDualVariable)}

		Assume that \(\aFunction\) is \(\convCoeff\)-strongly convex and let \(\aVariable,\aDualVariable\in\kR^\aDimension\).
		First note that since \(\aFunction\) is proper, closed and \(\convCoeff\)-strongly convex, we have from \cite[\itemInRef{(b)} of \theoremInRef{5.26}\pageInRef{123}]{Beck2017aa} that \(\aFunction^*\) is \(\kinv{\convCoeff}\)-Lipschitz smooth over \(\kR^\obsdim\).
		In particular, \(\domain(\aFunction^*)=\kR^\aDimension\) and \(\aFunction^*\) is differentiable at any \(\aDualVariable\in\kR^d\), that is
		\begin{equation}\label{eq:def z}
			\subDiff\aFunction^*(\aDualVariable) =
			\{
			\nabla \aFunction^*(\aDualVariable)
			\}.
		\end{equation}
		Second, if \(\aVariable\notin\domain(\aFunction)\), then the left-hand side of~\eqref{theo: FY inequalities 2} is infinite and the inequality trivially holds true since the right-hand side is finite.
		We conclude the proof by showing that~\eqref{theo: FY inequalities 2} is also valid for \(\aVariable\in\domain(\aFunction)\).
		Using the fact that \(\aFunction\) is proper, closed and convex, we obtain from \Cref{lemma:subdifferential inversion} with the pair \((\nabla\aFunction^*(\aDualVariable), \aDualVariable)\) that
		\begin{equation}
			\aDualVariable\in\partial\aFunction(\aSubGradientInTheProof)
			,
		\end{equation}
		and from \cite[\propositionInRef{16.4.(i)}]{Bauschke2017}, 
		\begin{equation}
			\nabla \aFunction^*(\aDualVariable)\in \domain(\aFunction)
			.
		\end{equation}
		Invoking the first-order characterization of \(\convCoeff\)-strong convexity of \(\aFunction\) at \(\nabla \aFunction^*(\aDualVariable)\in \domain(\aFunction)\) (see \cite[\theoremInRef{5.24}]{Beck2017aa}) then leads to 
		\begin{equation} \label{eq:proof fenchel-young strong convexity:applying strong convexity}
			\aFunction(\aVariable)
			\geq
			\aFunction(\aSubGradientInTheProof) + \dotp{\aDualVariable}{\aVariable - \aSubGradientInTheProof}
			+ \tfrac{\convCoeff}{2}\kvvbar{\aVariable - \aSubGradientInTheProof}^2.
		\end{equation}
		Finally, considering the standard Fenchel-Young inequality \eqref{theo: FY inequalities 1} with \(\aVariable=\aSubGradientInTheProof\) and using \eqref{eq:def z}, we have that the following equality holds: 
		\begin{equation}\label{eq:equality case FY}
			\aFunction(\aSubGradientInTheProof) + \aFunction^*(\aDualVariable) = \dotp{\aDualVariable}{\aSubGradientInTheProof}
			.
		\end{equation}
		We obtain the desired result~\eqref{theo: FY inequalities 2} by re-injecting~\eqref{eq:equality case FY} into~\eqref{eq:proof fenchel-young strong convexity:applying strong convexity}.
	}
\end{proof}
\vspace{0.2cm}

\iftoggle{pagebreakSection}{
	\vfill
	\pagebreak	
}{
}


\section{Proofs related to construction of the \FBI{} framework}\label{app:proofs FBI}

\subsection{Proof of \texorpdfstring{\Cref{theo: holder ball}}{Section~\ref{prop:gap inequality}}} \label{subsec:proof gap inequality}

We first notice that our result in \Cref{theo: holder ball} is an equivalent rewriting of the following proposition:
\vspace{0.2cm}

\begin{prop} \label{prop:gap inequality}
	If hypotheses \hypref{H2}-\hypref{H3} hold true then the following inequality  is satisfied for  any \((\pv, \dv) \in \kR^\pvdim\times\kR^\dvdim\):
	\begin{equation} \label{eq:subsec:FBI inequality V-GAP}
		\normTwo{\dvopt - \dv}^2 + \normTwo{\dvopt + \nabla\loss(\dicomat\pv)}^2
		\leq
		\frac{2\gapfunc(\pv, \dv)}{\convCoeff}
		. \vspace{0.2cm}
	\end{equation}
\end{prop}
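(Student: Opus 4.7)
The plan is to prove~\eqref{eq:subsec:FBI inequality V-GAP} by a double application of the refined Fenchel--Young inequality~\eqref{theo: FY inequalities 2} to \(\loss^*\), combined with two copies of the standard Fenchel--Young inequality~\eqref{theo: FY inequalities 1} applied to \(\reg\), and finally the strong duality identity \(\gapfunc(\pvopt,\dvopt)=0\). Under~\hypref{H3}, \(\loss^*\) is proper, closed and \(\convCoeff\)-strongly convex (see~\cite[\theoremInRef{5.26}]{Beck2017aa}); together with~\hypref{H2} this yields \((\loss^*)^*=\loss\) and \(\nabla(\loss^*)^*=\nabla\loss\), so that~\eqref{theo: FY inequalities 2} is indeed applicable to \(\loss^*\). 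This twin use of the refined inequality is precisely what the name ``\FBI{}'' refers to.

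The key step is to evaluate~\eqref{theo: FY inequalities 2} at two \emph{crossed} pairs rather than diagonally. Taking \((\aVariable,\aDualVariable)=(-\dvopt,\dicomat\pv)\) produces the squared-norm term \(\tfrac{\convCoeff}{2}\normTwo{\dvopt+\nabla\loss(\dicomat\pv)}^2\) on the right-hand side. Taking instead \((\aVariable,\aDualVariable)=(-\dv,\dicomat\pvopt)\) and invoking the KKT identity \(\nabla\loss(\dicomat\pvopt)=-\dvopt\) from~\eqref{eq:optimality condition ustar = -nabla f(Axstar)} produces \(\tfrac{\convCoeff}{2}\normTwo{\dvopt-\dv}^2\). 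Summing these two inequalities brings the entire left-hand side of~\eqref{eq:subsec:FBI inequality V-GAP} into view, at the cost of two residual cross inner products \(\dotp{\dvopt}{\dicomat\pv}\) and \(\dotp{\dv}{\dicomat\pvopt}\) on the right-hand side.

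To cancel these cross terms I would then add two instances of the standard Fenchel--Young inequality~\eqref{theo: FY inequalities 1} applied to \(\reg\), namely at \((\pv,\ktranspose{\dicomat}\dvopt)\) and \((\pvopt,\ktranspose{\dicomat}\dv)\). After the addition, the four inner products cancel exactly and the right-hand side becomes a plain sum of \(\loss, \loss^*, \reg, \reg^*\) values evaluated at \((\pv,\dv)\) and at \((\pvopt,\dvopt)\). Invoking strong duality through \Cref{lemma:strong duality}, that is \(\loss(\dicomat\pvopt)+\reg(\pvopt)+\loss^*(-\dvopt)+\reg^*(\ktranspose{\dicomat}\dvopt)=0\), wipes out the optimum contributions and leaves precisely \(\gapfunc(\pv,\dv)\). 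Multiplying through by \(\tfrac{2}{\convCoeff}\) delivers the claimed bound.

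I expect the main conceptual hurdle to lie in recognizing that the refined inequality must be instantiated at the \emph{crossed} arguments mixing one free point with one optimum; a diagonal evaluation at \((-\dv,\dicomat\pv)\) and \((-\dvopt,\dicomat\pvopt)\) would not trigger the cancellation of the cross inner products against the Fenchel--Young inequalities for \(\reg\). A secondary sanity check concerns the generality stated in \Cref{prop:gap inequality}, namely that the estimate holds for every \((\pv,\dv)\in\kR^\pvdim\times\kR^\dvdim\): whenever \((\pv,\dv)\notin\domain(\pfunc)\times\domain(-\dfunc)\), the right-hand side \(\tfrac{2\gapfunc(\pv,\dv)}{\convCoeff}\) is \(+\infty\) and the bound is vacuous, so it suffices to carry out the argument above on \(\domain(\pfunc)\times\domain(-\dfunc)\), where every Fenchel--Young quantity involved is finite.
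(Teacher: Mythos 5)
Your proof is correct and is essentially the paper's own argument: the paper packages the combination ``refined Fenchel--Young on \(\loss^*\) plus standard Fenchel--Young on \(\reg\)'' into \Cref{lemma:u - r_x GAP inequality} and applies it at the two crossed pairs \((\pvopt,\dv)\) and \((\pv,\dvopt)\), which after summation and strong duality yields exactly the four inequalities you sum by hand (the cross inner products cancelling against the \(\reg\)-terms is precisely the content of that lemma). Your handling of the degenerate case \((\pv,\dv)\notin\domain(\pfunc)\times\domain(-\dfunc)\) also matches the paper's.
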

In the rest of this section, we thus concentrate on the proof of \Cref{prop:gap inequality}.
Our arguments leverage 
the following lemma whose proof is postponed to \Cref{proof:r_x GAP inequality}:\vspace{0.2cm}
\begin{lemm} \label{lemma:u - r_x GAP inequality}
	If hypotheses \hypref{H2}-\hypref{H3} hold true, then
	the following inequality is satisfied for any \((\pv,\dv)\in\kR^\pvdim\times\kR^\dvdim\):
	\begin{equation} \label{eq:lemma:u - r_x GAP inequality}
		\norm{\dv + \nabla\loss(\dicomat\pv)}_2^2
		\leq
		\frac{
			2\gapfunc(\pv,\dv)
		}{
			\convCoeff
		}
		. \vspace{0.2cm}
	\end{equation}
\end{lemm}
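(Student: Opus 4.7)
The plan is to derive \eqref{eq:lemma:u - r_x GAP inequality} by summing two carefully chosen Fenchel-Young inequalities, one refined (applied to \(\loss^*\)) and one standard (applied to \(\reg\)), so that the cross terms cancel and the duality gap appears on the left-hand side.

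First, I would dispose of the degenerate case: if \((\pv,\dv)\notin\domain(\pfunc)\times\domain(-\dfunc)\), then \(\gapfunc(\pv,\dv)=+\infty\) and~\eqref{eq:lemma:u - r_x GAP inequality} holds trivially. So I may assume \(\pv\in\domain(\reg)\) (since \(\domain(\pfunc)=\domain(\reg)\) under \hypref{H3}) and \(-\dv\in\domain(\loss^*)\).

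The main step is to apply the refined Fenchel-Young inequality \eqref{theo: FY inequalities 2} to the function \(\loss^*\). Under \hypref{H2}-\hypref{H3}, \(\loss\) is proper, closed, convex and \(\convCoeff^{-1}\)-Lipschitz smooth over \(\kR^\dvdim\), which by \cite[Theorem~5.26]{Beck2017aa} implies that \(\loss^*\) is proper, closed and \(\convCoeff\)-strongly convex. Moreover, since \(\loss\) is proper, closed and convex, biconjugation gives \(\loss^{**}=\loss\), so \(\nabla\loss^{**}=\nabla\loss\). Evaluating \eqref{theo: FY inequalities 2} with \(\aFunction=\loss^*\) at \(\aVariable=-\dv\) and \(\aDualVariable=\dicomat\pv\) yields
\begin{equation*}
\loss^*(-\dv)+\loss(\dicomat\pv)\geq -\dotp{\dicomat\pv}{\dv}+\tfrac{\convCoeff}{2}\|\dv+\nabla\loss(\dicomat\pv)\|_2^2.
\end{equation*}
Next, applying the standard Fenchel-Young inequality \eqref{theo: FY inequalities 1} to \(\reg\) at \(\aVariable=\pv\) and \(\aDualVariable=\ktranspose{\dicomat}\dv\) gives
\begin{equation*}
\reg(\pv)+\reg^*(\ktranspose{\dicomat}\dv)\geq \dotp{\ktranspose{\dicomat}\dv}{\pv}=\dotp{\dicomat\pv}{\dv}.
\end{equation*}

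The final step is simply to add the two inequalities: the cross terms \(\pm\dotp{\dicomat\pv}{\dv}\) cancel, and the left-hand side becomes exactly \(\loss(\dicomat\pv)+\reg(\pv)+\loss^*(-\dv)+\reg^*(\ktranspose{\dicomat}\dv)=\pfunc(\pv)-\dfunc(\dv)=\gapfunc(\pv,\dv)\). Rearranging yields
\begin{equation*}
\tfrac{\convCoeff}{2}\|\dv+\nabla\loss(\dicomat\pv)\|_2^2\leq \gapfunc(\pv,\dv),
\end{equation*}
which is equivalent to \eqref{eq:lemma:u - r_x GAP inequality}. The only genuinely non-routine ingredient is the refined Fenchel-Young inequality \eqref{theo: FY inequalities 2} for strongly convex functions, but this is already established in \Cref{theo: FY inequalities}; everything else is bookkeeping and identification of terms with the duality gap.
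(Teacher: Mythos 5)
Your proposal is correct and follows essentially the same route as the paper's proof: dispose of the case \((\pv,\dv)\notin\domain(\pfunc)\times\domain(-\dfunc)\), apply the refined Fenchel--Young inequality \eqref{theo: FY inequalities 2} to \(\loss^*\) at \((-\dv,\dicomat\pv)\) using \(\loss^{**}=\loss\), and combine with the standard Fenchel--Young inequality for \(\reg\) so that the inner-product terms cancel and the duality gap appears. The only cosmetic difference is that you sum the two inequalities directly whereas the paper adds \(\reg(\pv)+\reg^*(\ktranspose{\dicomat}\dv)\) to both sides after explicitly checking its finiteness; your domain restrictions make this the same bookkeeping.
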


\Cref{prop:gap inequality} can be proved by applying \Cref{lemma:u - r_x GAP inequality} for two different choices of couple \((\pv,\dv)\).
We remind the reader that: {\textit{i)} we assume that \eqref{eq:primal problem (fuzzy version)} admits (at least) one minimizer \(\pvopt\); \textit{ii)} there exists a unique maximizer \(\dvopt\) to \eqref{def:dual solution} under \hypref{H2}-\hypref{H3}, see \Cref{lemma:strong duality}.} 
A first application of \Cref{lemma:u - r_x GAP inequality} with \((\pvopt,\dv)\) then leads to
\begin{equation} 
	\norm{\dv + \nabla\loss(\dicomat\pvopt)}_2^2
	\leq
	\frac{
		2(\pfunc(\pvopt) - \dfunc(\dv))
	}{
		\convCoeff
	}
	.
\end{equation}
Since \Cref{lemma:strong duality} also ensures that strong duality holds, we have \(\pfunc(\pvopt)=\dfunc(\dvopt)\) and \(\dvopt=-\nabla\loss(\dicomat\pvopt)\). Therefore, the previous inequality can also be rewritten as
\begin{equation} \label{eq:proof gap inequality:first application lemma}
	\norm{\dv - \dvopt}_2^2
	\leq
	\frac{
		2(\dfunc(\dvopt) - \dfunc(\dv))
	}{
		\convCoeff
	}
	.
\end{equation}
A second application of \Cref{lemma:u - r_x GAP inequality} with the pair \((\pv,\dvopt)\) yields
\begin{equation} \label{eq:proof gap inequality:second application lemma}
	\norm{\dvopt + \nabla\loss(\dicomat\pv)}_2^2
	\leq
	\frac{
		2(\pfunc(\pv) - \dfunc(\dvopt))
	}{
		\convCoeff
	}
	.
\end{equation}
Summing up \eqref{eq:proof gap inequality:first application lemma} and \eqref{eq:proof gap inequality:second application lemma} leads to the desired result~\eqref{eq:subsec:FBI inequality V-GAP}.


\vspace*{.1cm}

\subsection{Proof of \texorpdfstring{\cref{lemma:u - r_x GAP inequality}}{Lemma~\ref{lemma:u - r_x GAP inequality}}} \label{proof:r_x GAP inequality}
{
	If \((\pv,\dv)\notin \domain(\pfunc)\times \domain(-\dfunc)\), then \eqref{eq:lemma:u - r_x GAP inequality} 
	is trivially satisfied since the left-hand side is finite whereas the right-hand side is equal to \(+\infty\).
	In the rest of the proof, we thus assume that \((\pv,\dv)\in \domain(\pfunc)\times \domain(-\dfunc)\).

	As discussed in \Cref{sec:optimization framework}, hypotheses \hypref{H2}-\hypref{H3} imply that \(\loss^*\) is proper, closed and \(\convCoeff\)-strongly convex.
	Applying the refined Fenchel-Young inequality~\eqref{theo: FY inequalities 2} 
	with \(\aFunction=\loss^*\), \(\aVariable=-\dv\) and \(\aDualVariable=\dicomat\pv\) then leads to
	\begin{equation}\label{eq:proof:lemma:FBI:refined FY0}
		\loss^*(-\dv) + \loss^{**}(\dicomat\pv)
		\geq
		\dotp{-\dv}{\dicomat\pv}
		+ \frac{\convCoeff}{2}\norm{\dv + \nabla\loss^{**}(\dicomat\pv)}_2^2
		.
	\end{equation}
	Using \cite[\theoremInRef{4.8}\pageInRef{89}]{Beck2017aa}, we have that \(\loss^{**}=\loss\) since \(\loss\) is proper, closed and convex by \hypref{H2}. Therefore, \eqref{eq:proof:lemma:FBI:refined FY0} can be equivalently rewritten as:
	\begin{equation}\label{eq:proof:lemma:FBI:refined FY}
		\loss(\dicomat\pv) + \loss^*(-\dv)
		\geq
		\dotp{-\dv}{\dicomat\pv}
		+ \frac{\convCoeff}{2}\norm{\dv + \nabla\loss(\dicomat\pv)}_2^2
		.
	\end{equation}
	In order to conclude the proof, we need to add \(\reg(\pv) + \reg^*(\ktranspose{\dicomat}\dv)\) to both sides of this inequality.
	Prior to this operation, we have nevertheless to ensure that \(\reg(\pv) + \reg^*(\ktranspose{\dicomat}\dv)<+\infty\). To that end, we notice that \(\loss(\pv)>-\infty\) and \(\loss^*(-\dv)>-\infty\) since \(\loss\) and \(\loss^*\) are proper. Hence,
	\begin{align}
		\pfunc(\pv) < +\infty
		 & \implies
		\reg(\pv)<+\infty \\
		-\dfunc(\dv) < +\infty
		 & \implies
		\reg^*(\ktranspose{\dicomat}\dv)<+\infty
		.
	\end{align}
	Since we assume that \((\pv,\dv)\in \domain(\pfunc)\times \domain(-\dfunc)\), the left-hand sides of these implications are satisfied, so that \(\reg(\pv) + \reg^*(\ktranspose{\dicomat}\dv)<+\infty\).

	Adding \(\reg(\pv) + \reg^*(\ktranspose{\dicomat}\dv)\)
	to both sides of \eqref{eq:proof:lemma:FBI:refined FY} then leads to
	\begin{equation}
		\pfunc(\pv) - \dfunc(\dv)
		\geq
		\reg(\pv) + \reg^*(\ktranspose{\dicomat}\dv) - \dotp{\dv}{\dicomat\pv}
		+ \frac{\convCoeff}{2}\norm{\dv + \nabla \loss(\dicomat\pv)}_2^2
		.
	\end{equation}
	Finally, since \(\reg\) is proper, closed and convex from \hypref{H2}, we can apply the Fenchel-Young inequality~\eqref{theo: FY inequalities 1} with \(\aFunction=\reg\), \(\aVariable=\pv\) and \(\aDualVariable=\ktranspose{\dicomat}\dv\) to obtain~\eqref{eq:lemma:u - r_x GAP inequality}.\\
}

\iftoggle{pagebreakSection}{
	\vfill
	\pagebreak	
}{
}


\section{Proofs of the connections with existing results}
\label{app:proofs comparison}
This appendix gathers all proofs related to the comparison of the proposed \Holder ball with state-of-the-art safe regions presented in \Cref{sec:GAP ball} to \Cref{sec:SLORE and SFER balls}.

\subsection{Proof of \texorpdfstring{\Cref{lemma: connection between gap and divergence}}{Lemma~\ref{lemma: connection between gap and divergence}}}\label{proof:lemma: connection between gap and divergence}

Let \((\pv,\dv)\in \domain(\pfunc)\times\domain(-\dfunc)\).
Using the definition of the primal and dual cost functions, we obtain:
\begin{align}
	\gapfunc(\pv,\dv)
	 & \,=\,
	\loss(\dicomat\pv) + \reg(\pv) + \loss^*(-\dv) + \reg^*(\ktranspose{\dicomat}\dv)
	\nonumber \\
	 & \,=\,
	\loss(\dicomat\pv) + \loss^*(-\dv)
	+ \dotp{\dicomat\pv}{\dv}
	+\reg(\pv) + \reg^*(\ktranspose{\dicomat}\dv)
	- \dotp{\dicomat\pv}{\dv}
	\nonumber \\
	 & \,=\,
	\Fen(\pv,\dv) + \reg(\pv) + \reg^*(\ktranspose{\dicomat}\dv)
	- \dotp{\dicomat\pv}{\dv}
	.
	\nonumber
\end{align}
If \eqref{eq:condition couple u=-nabla f(Ax)} holds, we then have from \Cref{theo: FY inequalities} that
\begin{align}
	\reg(\pv) + \reg^*(\ktranspose{\dicomat}\dv)
	- \dotp{\dicomat\pv}{\dv}=0.
	\nonumber
\end{align}
This shows the first equality in \eqref{eq:connection between gap and divergence}.

The second inequality can be obtained by noticing that (from \Cref{theo: FY inequalities})
\begin{equation} \label{eq:proof fenchel bregman inequality:fenchel young equality case}
	\loss(\dicomat\pv) + \loss^*(\nabla\loss(\dicomat\pv)) = \dotp{\nabla\loss(\dicomat\pv)}{\dicomat\pv}
\end{equation}
since \(\loss\) is proper, convex and \(\partial\loss(\dicomat\pv)=\{\nabla\loss(\dicomat\pv)\}\).
Hence,
\begin{align}
	\Fen(\pv,\dv)
	\,=\, &
	\loss(\dicomat\pv) + \loss^*(-\dv) + \dotp{\dicomat\pv}{\dv}
	\nonumber               \\
	\,=\, &
	\loss^*(-\dv) - \loss^{*}(\nabla\loss(\dicomat\pv)) + \dotp{\dicomat\pv}{\dv + \nabla\loss(\dicomat\pv)}
	\nonumber               \\
	\,=\, & \Breg(\pv,\dv).
	\nonumber
\end{align}

\subsection{Proof of \texorpdfstring{\Cref{prop:FBI ball subset GAP ball}}{Proposition~\ref{prop:FBI ball subset GAP ball}}} \label{proof:FBI subset GAP}


Using \Cref{prop:gap inequality}, we have
\begin{align}\label{proof:eq:def FBI ball}
	\safeball({\HOLDERballc,\HOLDERballr})
	 & = \kset{\dv'\in\kR^\dvdim}{
		\normTwo{\dv' - \dv}^2 + \normTwo{\dv' + \nabla\loss(\dicomat\pv)}^2
		\leq
		\frac{2\gapfunc(\pv, \dv)}{\convCoeff}
	}
	,
\end{align}
whereas the definitions of the GAP and \(\pv-\)GAP balls (see~\eqref{eq:def GAP ball} and~\eqref{eq:xpball:defs}, respectively) lead to:
\begin{align}
	\safeball(\GAPballc,\GAPballr)   & =\kset{\dv'\in\kR^\dvdim}{
		\normTwo{\dv' - \dv}^2
		\leq
		\frac{2\gapfunc(\pv, \dv)}{\convCoeff}
	}
	\label{proof: def: GAP ball}                                  \\
	\safeball(\xGAPballc,\xGAPballr) & =\kset{\dv'\in\kR^\dvdim}{
		\normTwo{\dv' + \nabla\loss(\dicomat\pv)}^2
		\leq
		\frac{2\gapfunc(\pv, \dv)}{\convCoeff}
	}
	.
	\label{proof: def: xGAP ball}
\end{align}
Since the membership conditions in \eqref{proof: def: GAP ball} and \eqref{proof: def: xGAP ball} are relaxations of the inequality defining the \FBI{} ball in \eqref{proof:eq:def FBI ball}, inclusions \eqref{eq:FBI ball subset GAP ball B} and \eqref{eq:FBI ball subset xGAP ball} necessarily hold.

Finally, to prove strict inclusion it is then sufficient to note that
\begin{align}
	\HOLDERballr & \leq \tfrac{\gapfunc(\pv,\dv)}{\convCoeff}< \GAPballr \nonumber  \\
	\HOLDERballr & \leq \tfrac{\gapfunc(\pv,\dv)}{\convCoeff}< \xGAPballr \nonumber
\end{align}
whenever \(\gapfunc(\pv,\dv)\neq 0\).

\subsection{Proof of \texorpdfstring{\Cref{cor: dynamic EDPP ball}}{Proposition~\ref{cor: dynamic EDPP ball}}} \label{subsec:proof connection EDPP ball}

We first note that the \FBI{} ball in \Cref{theo: holder ball} is well-defined.
Indeed, functions \(\loss\) and \(\reg\) in \eqref{eq:FNE:H1}-\eqref{eq:FNE:H2} are closed, proper and convex, so that \hypref{H2} holds.
Moreover, \hypref{H3} is verified with \(\convCoeff=1\).
For any feasible \((t\pv,\dv)\) with \(t\geq 0\), we thus have by definition:
\begin{align}
	\HOLDERballc(t)
	%
	 & = \tfrac{1}{2}(\dv+\obs - t\dicomat\pv)
	 \quad 
	 \label{proof:dynEDDP:eq:expr HOLDERc} 
	 \\
	\HOLDERballr^2(t) 
	 & =
	\gapfunc(t\pv, \dv) - \tfrac{1}{4}\normTwo{\dv -\obs + t \dicomat\pv}^2.\label{proof:dynEDDP:eq:expr HOLDERr}
\end{align}
Using the definitions of \(\loss\) and \(\reg\) in \eqref{eq:dynEDDP:H1}-\eqref{eq:dynEDDP:H2}, we note that the duality gap can be expressed as
\begin{align}
	\gapfunc(\pv,\dv)  \,=\, &
	\tfrac{1}{2}\normTwo{\obs - \dicomat\pv}^2+ \lambda\norm{\pv}  - \tfrac{1}{2}\normTwo{\obs}^2 + \tfrac{1}{2}\normTwo{\obs - \dv}^2
	\nonumber                  \\
	\,=\,                    &
	\lambda\norm{\pv} + \tfrac{1}{2}\normTwo{\obs - \dicomat\pv}^2 - \dotp{\obs}{\dv} + \tfrac{1}{2}\normTwo{\dv}^2
	\nonumber                  \\
	\,=\,                    &
	\lambda \norm{\pv} + \tfrac{1}{2}\normTwo{\dv - \obs + \dicomat\pv}^2 - \dotp{\dicomat\pv}{\dv}
	\nonumber
\end{align}
so that
\begin{align}
	\HOLDERballr^2(t)
	 & =
	\lambda t \norm{\pv} - t \dotp{\dicomat\pv}{\dv} + \tfrac{1}{4}\normTwo{\dv -\obs + t \dicomat\pv}^2. \label{proof:dynEDDP:eq:expr HOLDERr B}
\end{align}
Proving item \ref{item:dynamic EDPP ball:definition gamma} of \Cref{cor: dynamic EDPP ball} is equivalent to showing that the variable \(\dynEDPPballparam\) defined in \eqref{eq:EDPP:def definition gamma0} verifies 
\begin{equation}\label{proof:dynEDDP:goal 1}
	\dynEDPPballparam \in \kargmin_{t\geq 0}\ \HOLDERballr^2(t)
	.
\end{equation}
Since \(\HOLDERballr^2(t)\) is a convex function, it is sufficient to show that \(\dynEDPPballparam\) satisfies the problem's first-order optimality condition, \ie
\begin{equation}\label{proof:dynEDDP:eq:optimality condition}
	\forall t\geq 0:\ (\HOLDERballr^2(\dynEDPPballparam))' (t-\dynEDPPballparam)\geq 0
	,
\end{equation}
where
\begin{equation}
	(\HOLDERballr^2(t))' = \lambda \|\pv\|-\tfrac{1}{2} \dotp{\dicomat\pv}{\dv+\obs} + \tfrac{t}{2}\|\dicomat\pv\|_2^2.
\end{equation}
We distinguish between three cases.
First, if \(\dicomat\pv={\bf0}_\pvdim\), then \((\HOLDERballr^2(t))'=\lambda \|\pv\|\geq 0\) so that
\begin{equation}
	0 \in \kargmin_{t\geq 0}\HOLDERballr^2(t).
\end{equation}
In this case, the definition of \(\dynEDPPballparam\) in \eqref{eq:EDPP:def definition gamma0} also leads to \(\dynEDPPballparam=0\) (by using the conventions \(0/0=0\) and \(1/0=+\infty\)).
Second, if \(\dicomat\pv\neq{\bf0}_\pvdim\) and
\begin{equation}
	\tilde{t}\triangleq \frac{\dotp{\dicomat\pv}{\obs+\dv}-2\lambda\|\pv\|}{\|\dicomat\pv\|_2^2}\geq 0,
\end{equation}
we easily have that
\begin{equation}
	\tilde{t} \in \kargmin_{t\geq 0}\HOLDERballr^2(t)
\end{equation}
since \((\HOLDERballr^2(\tilde{t}))'=0\).
In this case, one deduces \(\dynEDPPballparam=\frac{\dotp{\dicomat \pv}{\obs+\dv}-2\lambda\|\pv\|}{\|\dicomat\pv\|_2^2}\).
Finally, if \(\dicomat\pv\neq{\bf0}_\pvdim\) and \(\tilde{t}<0\), we then have that \((\HOLDERballr^2(0))'\geq 0\) since \((\HOLDERballr^2(0))' = - \frac{\|\dicomat\pv\|_2^2}{2} \tilde{t}\) and \(\tilde{t}<0\).
In this case, \(0\) is a minimizer since it verifies \eqref{proof:dynEDDP:eq:optimality condition} and this corresponds again to the definition of \(\dynEDPPballparam\) in \eqref{eq:EDPP:def definition gamma0}.\\

Showing item \ref{item:dynamic EDPP ball:special case holder ball} of \Cref{cor: dynamic EDPP ball} is tantamount to showing that
\begin{align}
	\HOLDERballc(\dynEDPPballparam) & = \dynEDPPballc \label{proof:dynEDDP:goal 2a}  \\
	\HOLDERballr(\dynEDPPballparam) & = \dynEDPPballr \label{proof:dynEDDP:goal 2b}.
\end{align}
On the one hand, since item \ref{item:dynamic EDPP ball:definition gamma} of \Cref{cor: dynamic EDPP ball} is true, we directly have from the expression of \(\HOLDERballc(t)\) in \eqref{proof:dynEDDP:eq:expr HOLDERc} that \eqref{proof:dynEDDP:goal 2a} holds.
On the other hand, \eqref{proof:dynEDDP:goal 2b} can be shown by examining the following two cases.

If \(\dynEDPPballparam=0\), the equality in \eqref{proof:dynEDDP:goal 2b} follows directly from the definition \eqref{proof:dynEDDP:eq:expr HOLDERr B}.
If \(\dynEDPPballparam>0\), we have \((\HOLDERballr^2(\dynEDPPballparam))'=0\), \ie
\begin{align}
	\lambda \|\pv\| = \tfrac{1}{2} \dotp{\dicomat\pv}{\dv+\obs} - \tfrac{\dynEDPPballparam}{2}\|\dicomat\pv\|_2^2
	.
\end{align}
Plugging this equality into \eqref{proof:dynEDDP:eq:expr HOLDERr B} then leads to
\begin{align}
	\HOLDERballr^2(\dynEDPPballparam)
	 & =
	\tfrac{1}{2}\dotp{ \dynEDPPballparam\dicomat\pv }{ \obs - \dv }
	- \tfrac{1}{2} \normTwo{ \dynEDPPballparam \dicomat\pv }^2
	+ \tfrac{1}{4}\normTwo{\dynEDPPballparam\dicomat\pv - \obs + \dv}^2
	\nonumber                                                                         \\
	 & =\tfrac{1}{4}\|\obs-\dv\|_2^2-\tfrac{1}{4}\|\dynEDPPballparam\dicomat\pv\|_2^2
	\nonumber                                                                         \\
	 & = \dynEDPPballr^{2}
	. \nonumber
\end{align}

\subsection{Proof of \texorpdfstring{\Cref{prop:Sequential FNE ball is a special case of Holder ball}}{Proposition~\ref{prop:Sequential FNE ball is a special case of Holder ball}}} \label{subsec:proof connection FNE ball}

It is straightforward to see from the definition of \(\loss\) and \(\reg\) in \eqref{eq:FNE:H1}-\eqref{eq:FNE:H2} that \hypref{H2}-\hypref{H3} are satisfied with \(\convCoeff=1\).
The \FBI{} ball in \Cref{theo: holder ball} is therefore well-defined. We next show that the center and radius of the \FBI{} and FNE balls coincide.

First, using the definition of \(\loss\) in~\eqref{eq:FNE:H1}, we have
\begin{equation} \label{eq: expression grad FNE}
	\nabla\loss(\dicomat\pv) = -(\obs-\dicomat\pv)
	.
\end{equation}
Hence,
\begin{equation*}
	\HOLDERballc
	= \tfrac{1}{2}(\dv -\nabla\loss(\dicomat\pv))
	= \dv + \tfrac{1}{2}(\obs- \dicomat \pv - \dv )
	= \FNEballc
	.
\end{equation*}

Second, using the definition of \(\reg\) in \eqref{eq:FNE:H2} and \eqref{eq:subdiff norm l1}, it can be seen that condition ``\(\dotp{\dv}{\dicomat \pv} = \lambda\normOne{\pv}\)'' in \eqref{eq: hat opt 2} together with feasibility of \((\pv,\dv)\) is equivalent to ``\(\ktranspose{\dicomat}\dv = \partial\reg(\pv)\)'' in \eqref{eq:condition couple u=-nabla f(Ax)} so that \Cref{lemma: connection between gap and divergence} applies.
In particular, we have:
\begin{equation*}
	\gapfunc(\pv,\dv)
	=
	\Fen(\pv,\dv)
	= \loss(\dicomat\pv) + \loss^*(-\dv) + \dotp{\dv}{\dicomat\pv}
	.
\end{equation*}
Since
\begin{equation*}
	\loss^*(\dv)= \tfrac{1}{2}\|\dv\|_2^2+\dotp{\dv}{\obs},
\end{equation*}
the duality gap can thus also be written as
\begin{align}
	\gapfunc(\pv,\dv) & = \tfrac{1}{2}\|\obs-\dicomat\pv\|_2^2 + \tfrac{1}{2}\|\dv\|_2^2-\dotp{\dv}{\obs}+
	\dotp{\dv}{\dicomat\pv}\nonumber                                                                       \\
	                  & =\tfrac{1}{2}\|\obs-\dicomat\pv-\dv\|_2^2. \label{eq:expression GAP FNE}
\end{align}
Going back to the definition of the radius of the \FBI{} ball (with \(\convCoeff=1\)), we finally obtain:
\begin{align}
	\HOLDERballr^2
	 & = \gapfunc(\pv, \dv) - \tfrac{1}{4}\normTwo{\dv + \nabla\loss(\dicomat\pv)}^2
	\nonumber                                                                                 \\
	 & = \tfrac{1}{2}\|\obs-\dicomat\pv-\dv\|_2^2 - \tfrac{1}{4} \|\obs-\dicomat\pv-\dv\|_2^2
	\nonumber                                                                                 \\
	 & = \FNEballr^2
	,
	\nonumber
\end{align}
where we have used \eqref{eq: expression grad FNE} and \eqref{eq:expression GAP FNE} in the second equality.

\subsection{Proof of \texorpdfstring{\Cref{prop: connection to SAFE ball}}{Proposition~\ref{prop: connection to SAFE ball}}}
\label{subsec:proof:prop: connection to SAFE ball}
It is easy to see that the definitions of \(\loss\) and \(\reg\) in \eqref{eq:FNE:H1}-\eqref{eq:FNE:H2} verify \hypref{H2}-\hypref{H3} with \(\convCoeff=1\), so that
the \FBI{} ball in \Cref{theo: holder ball} is well-defined.

On the one hand, using the definition of \(\loss\) in \eqref{eq:FNE:H1} with \(\pv={\bf0}_\pvdim\), we have
\begin{equation}
	\nabla \loss(\dicomat\pv) = -\obs
	.
\end{equation}
On the other hand, noticing that the couple \(({\bf0}_\pvdim,\dv)\) verifies \eqref{eq:condition couple u=-nabla f(Ax)} and using the same reasoning as in the proof of \Cref{prop:Sequential FNE ball is a special case of Holder ball}, we obtain from \eqref{eq:expression GAP FNE}:
\begin{equation}
	\gapfunc(\pv,\dv) = \tfrac{1}{2}\|\obs-\dv\|_2^2.
\end{equation}
Finally, using \Cref{prop:gap inequality}, we have
\begin{equation}\label{proof:SAFE:eq:def FBI ball}
	\HOLDERball({\bf0}_\pvdim,\dv)
	= \kset{\dv'\in\kR^\dvdim}{
		\normTwo{\dv' - \dv}^2 + \normTwo{\dv' -\obs}^2
		\leq \|\obs-\dv\|_2^2
	}
	,
\end{equation}
whereas the SAFE ball is defined as
\begin{equation}\label{proof:SAFE:eq:def SAFE ball}
	\safeball({\SAFEballc,\SAFEballr})
	= \kset{\dv'\in\kR^\dvdim}{
		\normTwo{\dv' -\obs}^2
		\leq \|\obs-\dv\|_2^2
	}
	.
\end{equation}
Since the membership condition in \eqref{proof:SAFE:eq:def SAFE ball} is a relaxation of the inequality in \eqref{proof:SAFE:eq:def FBI ball}, inclusion \eqref{prop:eq:inclusion FBI in SAFE} holds.

\subsection{Proof of \texorpdfstring{\Cref{prop: connection to SLORE and SFER balls}}{Proposition~\ref{prop: connection to SLORE and SFER balls}}}
\label{subsec:proof around FBI inequality for logistic loss + ell_1}

It is easy to see from the definition of \(\loss\) and \(\reg\) in \eqref{eq:SLORE:H1}-\eqref{eq:SLORE:H2} that \hypref{H2}-\hypref{H3} hold with \(\convCoeff = 4\).
The \FBI{} ball in \Cref{theo: holder ball} is therefore well-defined.

We next show that the center and the radius of the \FBI{} and SFER balls coincide.
Since \((\pv, \dv)\) is defined as in \eqref{eq:def couple seq setup 1}-\eqref{eq:def couple seq setup 2}, we have from \Cref{lemma: generating primal-dual couple with rescaled problem} that this couple satisfies \eqref{eq:condition couple u=-nabla f(Ax)}. Using \Cref{lemma: connection between gap and divergence} then leads to
\begin{equation} \label{proof:SFER:gap=breg}
	\gapfunc(\pv,\dv) = \Breg(\pv,\dv)
	.
\end{equation}
Moreover, we also have from \eqref{eq:def couple seq setup 1}-\eqref{eq:def couple seq setup 2}:
\begin{equation}\label{proof:SFER:link gradf dv}
	\nabla \loss(\dicomat\pv) = -\gamma \dv.
\end{equation}
Using \eqref{proof:SFER:gap=breg}-\eqref{proof:SFER:link gradf dv}, we then easily find that
\begin{align}
	\HOLDERballc   & = \tfrac{1}{2}(\dv-\nabla \loss(\dicomat\pv))= \tfrac{1}{2}(1+\gamma)\dv=
	\seqFERballc\nonumber
	\\
	\HOLDERballr^2 & =  \tfrac{1}{4}\gapfunc(\pv,\dv)-\tfrac{1}{4}\|\dv+\nabla \loss(\dicomat\pv)\|_2^2\nonumber \\
	               & =\tfrac{1}{4}\Breg(\pv,\dv)-\tfrac{1}{4}\|(1-\gamma)\dv\|_2^2\nonumber                      \\
	               & =\seqFERballr^2.\nonumber
\end{align}
This shows the equality in \eqref{eq:connection FBI, SLORE, SFER}.




\bibliographystyle{siamplain}

\begin{thebibliography}{10}

\bibitem{Bauschke2017}
{\sc H.~H. Bauschke and P.~L. Combettes}, {\em Convex Analysis and Monotone
  Operator Theory in Hilbert Spaces}, Springer International Publishing, 2017,
  \url{https://doi.org/10.1007/978-3-319-48311-5}.

\bibitem{Beck2017aa}
{\sc A.~Beck}, {\em First-Order Methods in Optimization}, Society for
  Industrial and Applied Mathematics, 2017,
  \url{https://doi.org/10.1137/1.9781611974997}.

\bibitem{Blondel2020}
{\sc M.~Blondel, A.~F.~T. Martins, and V.~Niculae}, {\em Learning with
  fenchel-young losses}, Journal of Machine Learning Research, 21 (2020),
  pp.~1--69, \url{http://jmlr.org/papers/v21/19-021.html}.

\bibitem{Chen1998}
{\sc S.~S. Chen, D.~L. Donoho, and M.~A. Saunders}, {\em Atomic decomposition
  by basis pursuit}, {SIAM} Journal on Scientific Computing, 20 (1998),
  pp.~33--61, \url{https://doi.org/10.1137/s1064827596304010}.

\bibitem{Dai2012Ellipsoid}
{\sc L.~Dai and K.~Pelckmans}, {\em An ellipsoid based, two-stage screening
  test for {BPDN}}, in Proceedings of the European Signal Processing Conference
  (EUSIPCO), IEEE, 2012, pp.~654--658,
  \url{http://ieeexplore.ieee.org/xpls/abs_all.jsp?arnumber=6334263}.

\bibitem{Dantas:2019sv}
{\sc C.~F. Dantas and R.~Gribonval}, {\em Stable safe screening and structured
  dictionaries for faster {$\ell_1$} regularization}, IEEE Transaction on
  Signal Processing, 67 (2019), pp.~3756--3769,
  \url{https://doi.org/10.1109/TSP.2019.2919404}.

\bibitem{Laurent-El-Ghaoui:2012qs}
{\sc L.~El~Ghaoui, V.~Viallon, and T.~Rabbani}, {\em Safe feature elimination
  for the {Lasso} and sparse supervised learning problems}, Pacific Journal of
  Optimization, 8 (2012), pp.~667--698.

\bibitem{elvira2020safe}
{\sc C.~Elvira and C.~Herzet}, {\em Safe squeezing for antisparse coding}, IEEE
  Transaction on Signal Processing, 68 (2020), pp.~3252--3265,
  \url{https://doi.org/10.1109/TSP.2020.2995192}.

\bibitem{elvira2020icassp}
{\sc C.~Elvira and C.~Herzet}, {\em {Short and squeezed: accelerating the
  computation of antisparse representations with safe squeezing}}, in
  Proceedings of the IEEE International Conference on Acoustics, Speech, and
  Signal Processing (ICASSP), 2020, pp.~5615--5619,
  \url{https://doi.org/10.1109/ICASSP40776.2020.9053156}.

\bibitem{elvira2021safeSLOPE}
{\sc C.~Elvira and C.~Herzet}, {\em Safe rules for the identification of zeros
  in the solutions of the slope problem}, {SIAM} Journal on Mathematics of Data
  Science, 5 (2023), pp.~147--173, \url{https://doi.org/10.1137/21M1457631}.

\bibitem{fercoq2015mind}
{\sc O.~Fercoq, A.~Gramfort, and J.~Salmon}, {\em Mind the duality gap: safer
  rules for the {Lasso}}, in Proceedings of the International Conference on
  Machine Learning (ICML), 2015, pp.~333--342.

\bibitem{guyard2022screen}
{\sc T.~Guyard, C.~Herzet, and C.~Elvira}, {\em {Screen \& relax: accelerating
  the resolution of Elastic-Net by safe identification of the solution
  support}}, in Proceedings of the IEEE International Conference on Acoustics,
  Speech, and Signal Processing (ICASSP), 2022, pp.~5443--5447,
  \url{https://doi.org/10.1109/ICASSP43922.2022.9747412}.

\bibitem{Herzet:2019fj}
{\sc C.~Herzet, C.~Dorffer, and A.~Dr{\'e}meau}, {\em Gather and conquer:
  Region-based strategies to accelerate safe screening tests}, IEEE Transaction
  on Signal Processing, 67 (2019), pp.~3300--3315,
  \url{https://doi.org/10.1109/TSP.2019.2914885}.

\bibitem{Herzet2022}
{\sc C.~Herzet, C.~Elvira, and H.-P. Dang}, {\em Region-free safe screening
  tests for $\ell_{1}$-penalized convex problems}, in Proceedings of the
  European Signal Processing Conference (EUSIPCO), 2022, pp.~2061--2065,
  \url{https://doi.org/10.23919/EUSIPCO55093.2022.9909532}.

\bibitem{Herzet16Screening}
{\sc C.~Herzet and A.~Malti}, {\em Safe screening tests for {Lasso} based on
  firmly non-expansiveness}, in Proceedings of the IEEE International
  Conference on Acoustics, Speech, and Signal Processing (ICASSP), 2016,
  pp.~4732--4736, \url{https://doi.org/10.1109/ICASSP.2016.7472575}.

\bibitem{Koh2007AnIM}
{\sc K.~Koh, S.-J. Kim, and S.~P. Boyd}, {\em An interior-point method for
  large-scale l1-regularized logistic regression}, Journal of Machine Learning
  Research, 8 (2007), pp.~1519--1555.

\bibitem{pmlr-v32-liuc14}
{\sc J.~Liu, Z.~Zhao, J.~Wang, and J.~Ye}, {\em Safe screening with variational
  inequalities and its application to lasso}, in Proceedings of the
  International Conference on Machine Learning (ICML), vol.~32, June 2014,
  pp.~289--297, \url{https://proceedings.mlr.press/v32/liuc14.html}.

\bibitem{ndiaye:tel-01962450}
{\sc E.~Ndiaye}, {\em {Safe optimization algorithms for variable selection and
  hyperparameter tuning}}, PhD thesis, {Universit{\'e} Paris Saclay (COmUE)},
  Oct. 2018, \url{https://pastel.hal.science/tel-01962450}.

\bibitem{ndiaye2017gap}
{\sc E.~Ndiaye, O.~Fercoq, A.~Gramfort, and J.~Salmon}, {\em Gap safe screening
  rules for sparsity enforcing penalties}, Journal of Machine Learning
  Research, 18 (2017), pp.~4671--4703.

\bibitem{ndiaye2021}
{\sc E.~Ndiaye, O.~Fercoq, and J.~Salmon}, {\em {Screening Rules and its
  Complexity for Active Set Identification}}, Journal of Convex Analysis, 28
  (2021), pp.~1053--1072,
  \url{https://www.heldermann.de/JCA/JCA28/JCA284/jca28061.htm}.

\bibitem{pan2021safe}
{\sc X.~Pan and Y.~Xu}, {\em A safe feature elimination rule for l1-regularized
  logistic regression}, IEEE Transactions on Pattern Analysis and Machine
  Intelligence, 44 (2022), pp.~4544--4554,
  \url{https://doi.org/10.1109/TPAMI.2021.3071138}.

\bibitem{Tran2022}
{\sc T.-L. Tran, C.~Elvira, H.-P. Dang, and C.~Herzet}, {\em Beyond gap
  screening for lasso by exploiting new dual cutting half-spaces}, in
  Proceedings of the European Signal Processing Conference (EUSIPCO), 2022,
  pp.~2056--2060, \url{https://doi.org/10.23919/EUSIPCO55093.2022.9909943}.

\bibitem{wang2022sequential}
{\sc H.~Wang, K.~Jiang, and Y.~Xu}, {\em Sequential safe feature elimination
  rule for l1-regularized regression with kullback--leibler divergence}, Neural
  Networks, 155 (2022), pp.~523--535,
  \url{https://doi.org/10.1016/j.neunet.2022.09.008}.

\bibitem{wang2015lasso}
{\sc J.~Wang, P.~Wonka, and J.~Ye}, {\em Lasso screening rules via dual
  polytope projection}, Journal of Machine Learning Research, 16 (2015),
  pp.~1063--1101, \url{http://jmlr.org/papers/v16/wang15a.html}.

\bibitem{wang2014safe}
{\sc J.~Wang, J.~Zhou, J.~Liu, P.~Wonka, and J.~Ye}, {\em A safe screening rule
  for sparse logistic regression}, in Proceedings of Advances in neural
  information processing systems, vol.~27, 2014,
  \url{https://proceedings.neurips.cc/paper_files/paper/2014/file/185c29dc24325934ee377cfda20e414c-Paper.pdf}.

\bibitem{yamada2021dynamic}
{\sc H.~Yamada and M.~Yamada}, {\em Dynamic {Sasvi}: Strong safe screening for
  norm-regularized least squares}, in Proceedings of Advances in neural
  information processing systems, vol.~34, 2021, pp.~14645--14655,
  \url{https://proceedings.neurips.cc/paper_files/paper/2021/file/7b5b23f4aadf9513306bcd59afb6e4c9-Paper.pdf}.

\bibitem{Zou2005}
{\sc H.~Zou and T.~Hastie}, {\em Regularization and variable selection via the
  elastic net}, {Journal of the Royal Statistical Society: Series B
  (Methodological)}, 67 (2005), pp.~301--320,
  \url{https://www.jstor.org/stable/3647580}.

\end{thebibliography}

\end{document}